\documentclass[11pt]{amsart}

\usepackage[T1]{fontenc} 
\usepackage{lmodern}
\usepackage{microtype}
\usepackage{amsmath,amssymb,mathtools}
\usepackage{xcolor}
\usepackage{enumitem}
\usepackage[colorlinks=true,linkcolor=blue,citecolor=blue,urlcolor=blue]{hyperref}

\allowdisplaybreaks
\newcommand{\C}{\mathbb C}
\newcommand{\R}{\mathbb R}
\newcommand{\Z}{\mathbb Z}
\newcommand{\cO}{\mathcal O}
\newcommand{\cH}{\mathcal H}
\newcommand{\cP}{\mathcal P}
\newcommand{\cS}{\mathcal S}
\newcommand{\Ch}{\operatorname{Ch}}
\newcommand{\Irr}{\operatorname{Irr}}
\newcommand{\Ind}{\operatorname{Ind}}
\newcommand{\Hom}{\operatorname{Hom}}
\newcommand{\Lie}{\operatorname{Lie}}
\newcommand{\Ad}{\operatorname{Ad}}
\newcommand{\Stab}{\operatorname{Stab}}
\newcommand{\soc}{\operatorname{soc}}
\newcommand{\triv}{\mathbf 1}
\newcommand{\IM}{\operatorname{IM}}
\newcommand{\LLC}{\operatorname{LLC}}
\newcommand{\wt}{\operatorname{wt}}

\theoremstyle{plain}
\newtheorem{theorem}{Theorem}[section]
\newtheorem{proposition}[theorem]{Proposition}
\newtheorem{lemma}[theorem]{Lemma}
\newtheorem{corollary}[theorem]{Corollary}

\theoremstyle{definition}
\newtheorem{definition}[theorem]{Definition}

\theoremstyle{remark}
\newtheorem{remark}[theorem]{Remark}

\title[Enhanced Langlands parameters]{Enhanced Langlands Parameters of Regular Principal Series}
\author{Haoming Li}
\address{Cuiying Honors College, Lanzhou University,
No.~222 Tianshui South Road, Lanzhou 730000, China}
\email{lihaoming2023@lzu.edu.cn}

\author{Xiaohang Yang}
\address{Beijing International Center for Mathematical Research,
Peking University, No.~5 Yiheyuan Road, Haidian District,
Beijing 100871, China}
\email{yxh2219@stu.pku.edu.cn}

\author{Jizhi Zhang}
\address{School of Mathematics and Statistics, Lanzhou University,
No.~222 Tianshui South Road, Lanzhou 730000, China}
\email{zhangjzh2024@lzu.edu.cn}

\author{Yelei Zhao}
\address{School of Mathematical Sciences, Zhejiang University,
No.~866 Yuhangtang Road, Hangzhou 310058, China}
\email{3230104016@zju.edu.cn}
\date{}

\begin{document}

\begin{abstract}
We describe the enhanced Langlands parameters of the irreducible
constituents of a regular principal series of a quasi-split reductive group
over a non-archimedean local field of characteristic zero.  Starting from
the chamber parametrization by Rodier and Luo, we determine the monodromy
of each constituent and show that its enhancement is trivial.  In
particular, we obtain the corresponding parameter of the unique irreducible
subrepresentation.  
\end{abstract}

\maketitle

\setcounter{tocdepth}{1}
\tableofcontents
\clearpage

\section{Introduction}

A motivating question is when the Langlands parameter of the unique
irreducible subrepresentation of a regular principal series has zero
monodromy. For a quasi-split group, it is natural to seek a criterion
formulated directly in terms of the co-rank-one reducibility data over
the ground field.

Bella\"iche and Chenevier considered the vanishing question for the local monodromy in
\cite[Section~6.6]{BC09}, as a local condition for their construction of Galois representations. They passed to a splitting field and pulled
back the inducing character by the norm map.  They required the
resulting character to be regular and imposed a sign condition on
the absolute reducibility coroots.
They also discussed the possibility of an intrinsic criterion for
the original quasi-split group
\cite[Remark~6.6.6(4)]{BC09}.

The constituents of a regular principal series
have a parametrization in terms of chambers cut out by the co-rank-one
reducibility hyperplanes.  This parametrization is due to Rodier for split
groups \cite{Rod81}.  Luo extended it to regular generalized principal
series of reductive groups and proved the linear independence
of the reducibility coroots in that setting
\cite[Theorems~3.4 and~4.1]{Luo21}.  We use the chamber parametrization by
Rodier and Luo in this general form.  In particular, the constituent
classification, the Jacquet modules, and multiplicity one are known.

On the geometric side, Kazhdan and Lusztig established the
Deligne--Langlands classification for affine Hecke algebras \cite{KL87}.
For split groups, Roche constructed principal-series types and described
their Hecke algebras \cite[Theorems~6.3 and~7.7]{Roc98}.  Reeder used
this framework to give a geometric classification of ramified principal
series for split groups with connected centre \cite[Theorem~1]{Ree02}.
These results of Roche and Reeder have residual-characteristic
restrictions.  For the arbitrary quasi-split groups considered here, we
use Solleveld's canonical Whittaker-normalized principal-series
correspondence \cite[Theorems~5.4, 6.9, and~7.1]{Sol23}, together with
the geometric Hecke algebras of Aubert, Moussaoui, and Solleveld
\cite{AMS16,AMS17}.

The relation with chambers also appears in the theory of calibrated
Hecke modules.  Ram described regular affine principal-series
constituents by local regions and constructed their weight spaces
\cite[Theorems~2.12(a), 3.2, 3.5, and~3.6]{Ram03}; Kriloff and Ram
gave the graded version with arbitrary parameters
\cite[Theorems~4.5 and~4.7]{KR02}.  In particular, Ram's regular
$A_2$ example already associates subsets of roots with sums of root
vectors and trivial enhancements \cite[Table~4.1]{RamRankTwo03}.

Theorem~\ref{thm:main} determines the enhanced Langlands parameter
of each irreducible constituent directly from its chamber label.
For the unique irreducible subrepresentation, it gives a
zero-monodromy criterion using only the reducibility walls over $F$.
Proposition~\ref{prop:bc-comparison} shows that the conditions of
Bella\"iche and Chenevier imply this criterion.

The proof uses Solleveld's correspondence and its reductions for
ramified characters and nonsplit groups. We compute selected weights
in geometric standard modules and analyze the component-group action,
including when the centralizer is disconnected. These calculations,
together with the geometric multiplicity formula, identify the
nilpotent orbit and enhancement attached to each chamber.

\subsection{Notation and the main theorem}

Let $F$ be a non-archimedean local field of characteristic zero, with residue
cardinality $q_F$, and fix a uniformizer $\varpi_F$ normalized by
$|\varpi_F|=q_F^{-1}$.  Let $G$ be a connected quasi-split reductive group over
$F$, let $B=TU$ be an $F$-Borel subgroup, and let $S$ be the maximal $F$-split
subtorus of $T$.  We write
\[
  W=W(G,S)=N_G(S)(F)/T(F)
\]
for the relative Weyl group and fix a Whittaker datum for $G$.  All inductions
and Jacquet functors in this article are normalized.
We use the same notation for a character of a complex torus and its
differential.  For a smooth character $\eta$ of $T(F)$, write
$\widehat\eta:W_F\to{}^LT$ for a representative of its torus
Langlands parameter $\LLC_T(\eta)$.

We fix geometric Frobenius $\mathrm{Fr}_F$ and use the reciprocity
normalization $\operatorname{Art}_F(\varpi_F)=\mathrm{Fr}_F$ in
$W_F^{\mathrm{ab}}$.  Thus when $T$ is split, for an unramified character $\eta$ of $T(F)$
and $\mu\in X_*(T)=X^*(T^\vee)$,
\begin{equation}\label{eq:torus-reciprocity}
 \mu\bigl(\widehat\eta(\mathrm{Fr}_F)\bigr)
       =\eta\bigl(\mu(\varpi_F)\bigr),
\end{equation}
where $X_*(T)$ and $X^*(T^\vee)$ are the cocharacter lattice of $T$ and the character lattice 
of the dual torus $T^\vee$, respectively. 
The LLC for a general torus $T$ is taken with the same normalization;
see \cite[Section~3.3.1 and equation~(4.0.4)]{Hai14}.
Let $G^\vee$ be the complex dual group, with Borel pair
$T^\vee\subset B^\vee=T^\vee U^\vee$ dual to $(T,B)$: the simple
roots of $B^\vee$ correspond to the simple absolute coroots
determined by $B$.  Fix a compatible pinning and the associated Weil
action.  We write
\[
 B^{\vee,-}:=\overline{B^\vee}
\]
for the opposite Borel relative to $T^\vee$, so that
$B^\vee\cap B^{\vee,-}=T^\vee$.
We use $B^{\vee,-}$ in the transported Bernstein and geometric
Hecke presentations.
Indeed, the evaluation character attached to $\mu(\varpi_F^{-1})$
corresponds in \eqref{eq:torus-reciprocity} to $-\mu$ on $T^\vee$.
The same sign holds on the real unramified character spaces for a
nonsplit torus, by functoriality for cocharacters and norm maps.
Section~\ref{sec:hecke} specifies the transport of the whole Hecke and
geometric datum that accompanies this choice of Borel.

Let $\chi:T(F)\to\C^\times$ be smooth and regular, in the sense that
\[
  \Stab_W(\chi)=\{1\}.
\]
Write $\chi=\chi_u\chi_r$, where $\chi_u$ is unitary and
$\chi_r(t)=|\chi(t)|$.  For a reduced relative root $\alpha$,
let $M_\alpha$ be the Levi subgroup containing $T$ whose
relative roots are those proportional to $\alpha$.
The associated normalized co-rank-one induction is
$\Ind_{B\cap M_\alpha}^{M_\alpha}(\chi)$;
see \cite[Section~2.4]{Luo21}.
For each root line on which this induction is reducible,
choose the sign of its coroot $a$ so that
\begin{equation}\label{eq:positive-exponent}
  \chi_r\circ H_a=|\,\cdot\,|^{s_a},\qquad s_a>0,
\end{equation}
where $H_a:F^\times\to S(F)\subset T(F)$ is an integral
realization of the relative coroot direction.
For every reducibility line, the associated Weyl reflection $w_a$
fixes $\chi_u$; see \cite[Section~4]{Luo21}.
If the real exponent on that line were zero, $w_a$ would also fix
$\chi_r$, and hence $\chi$, contradicting regularity.
Thus the exponent is nonzero, and exactly one of the two choices
of sign for $a$ makes it positive.  This gives the orientation in
\eqref{eq:positive-exponent}.
If a relative coroot is
initially specified only as a rational cocharacter, choose a positive
integral multiple.  Replacing $H_a$ by its $m$-th power multiplies both
$s_a$ and the index $d_a$ below by $m$; the ratio $s_a/d_a$, the wall,
and its orientation are unchanged.

Put
\[
  \mathfrak a_T^*:=\Hom\bigl(T(F)/T(F)_{\mathrm{cpt}},\R\bigr).
\]
Here $T(F)_{\mathrm{cpt}}$ is the maximal compact subgroup of $T(F)$, $\mathfrak a_T=(\mathfrak a_T^*)^*$, and relative coroots are viewed
as elements of $\mathfrak a_T$ with its natural pairing with
$\mathfrak a_T^*$.  This convention for coroots is distinct from the
identification of the valuation lattice with cocharacters used below.

\begin{definition}[Reducibility arrangement]
\label{def:reducibility-arrangement}
Let $\cS(\chi)$ consist of the chosen coroot on each reducibility
line, oriented by \eqref{eq:positive-exponent}.
For $a\in\cS(\chi)$, put
\begin{align*}
 \mathcal H_a
   &:=\{x\in\mathfrak a_T^*:\langle x,a\rangle=0\},\\
 \mathcal H_a^+
   &:=\{x\in\mathfrak a_T^*:\langle x,a\rangle>0\}.
\end{align*}
The chambers of the reducibility arrangement are
\[
 \Ch(\chi):=\pi_0\left(
 \mathfrak a_T^*\setminus
 \bigcup_{a\in\cS(\chi)}\mathcal H_a
 \right),
\]
where $\pi_0$ denotes the set of connected components.
\end{definition}

We write
\begin{align*}
 X_{\mathrm{nr}}(T)
   &:=\Hom\bigl(T(F)/T(F)_{\mathrm{cpt}},\C^\times\bigr),\\
 X_{\mathrm{nr}}^+(T)
   &:=\Hom\bigl(T(F)/T(F)_{\mathrm{cpt}},\R^+\bigr)
\end{align*}
for the group of unramified characters of $T(F)$ and its
positive real-valued subgroup, respectively, where
$\R^+=(0,\infty)$ is viewed multiplicatively.
Pointwise exponentiation gives an isomorphism
\[
 \exp:\mathfrak a_T^*\xrightarrow{\sim}X_{\mathrm{nr}}^+(T),
 \qquad (\exp x)(h)=e^{x(h)}.
\]
Let $C^+$ be the dominant open Weyl chamber determined by $B$ and put
$C^-=w_0C^+$, where $w_0$ is the longest element of $W$.

For $a\in\cS(\chi)$, let $h_a$ be the primitive element on the corresponding
line in $T(F)/T(F)_{\mathrm{cpt}}$, oriented so that
\begin{equation}\label{eq:lattice-index}
  H_a(\varpi_F^{-1})\equiv h_a^{d_a}\pmod{T(F)_{\mathrm{cpt}}}
\end{equation}
for a positive integer $d_a$.  The rank-one labels and the formula for
$s_a/d_a$ are given in Section~\ref{sec:rank-one}.

The Bernstein torus attached to $\chi$ is the family
$\chi X_{\mathrm{nr}}(T)$ of its unramified twists;
see \cite[Section~1]{Sol23}.
Choose a unitary base point $\chi_0$ in this family, with the additional
normalization specified in Section~\ref{sec:hecke}.
  Taking $\chi_0$ as the identity
identifies this family with $X_{\mathrm{nr}}(T)$.
Its image under $\LLC_T$ is the corresponding dual Bernstein torus,
based at $\LLC_T(\chi_0)$.

Here and below, root identifications use the coordinate-algebra
isomorphism induced by $\LLC_T$.
For $h\in T(F)/T(F)_{\mathrm{cpt}}$, let $\kappa(h)$ be the
character of the dual Bernstein torus with $\LLC_T(\chi_0)$ as its
identity element, defined~by
\begin{equation}\label{eq:transported-coordinate}
 \kappa(h)\bigl(\LLC_T(\eta)\bigr)
       =\frac{\eta(h)}{\chi_0(h)},
\end{equation}
for all $\eta\in\chi_0X_{\mathrm{nr}}(T)$.
The quotient is computed using any representative of $h$ in $T(F)$
and is independent of that choice, since $\eta/\chi_0$ is unramified.
Changing the unitary base point multiplies $\kappa(h)$ by a constant
of modulus one, leaving its logarithmic differential and modulus
unchanged.
The geometric realization associates to the logarithmic differential
of $\kappa(h_a)$ a root on the same oriented line.  We denote that
\emph{geometric root} by $\iota(a)$; its positive scale is
computed in Lemma~\ref{lem:root-parameter-scale}.  It is a root of
\[
 H_\chi^\circ,\qquad
\text{where}\quad H_\chi:=Z_{G^\vee}\bigl(\widehat{\chi_u}(W_F)\bigr),
\]
relative to the maximal torus supplied by the principal-series datum.
Thus the root vectors in the theorem lie in $\Lie(H_\chi^\circ)$;
they need not lie in individual absolute root spaces of $G^\vee$.
The map $\iota$ preserves signs from the relative positive system for
$B$ to the geometric positive system for $B^{\vee,-}$.
Relative to $B^\vee$ its sign is the opposite one.
Its Weyl equivariance includes transport between the unitary
centralizers associated to Weyl-conjugate characters.
For a chamber
$\Gamma\in\Ch(\chi)$, let $\pi_\Gamma$ be the irreducible constituent
characterized by
\begin{equation}\label{eq:chamber-jacquet}
 r_B(\pi_\Gamma)=
 \bigoplus_{\substack{w\in W\\wC^+\subset\Gamma}}\chi^w,
 \qquad \chi^w=\chi\circ\Ad(w).
\end{equation}
The existence and uniqueness of these constituents follow from
\cite[Theorems~3.4 and~4.1]{Luo21}.

We also regard $\widehat\chi:W_F\to{}^LT$ as a parameter into ${}^LG$.
For an $L$-parameter $\phi:W_F\times\mathrm{SL}_2(\C)\to{}^LG$, define
\begin{equation}\label{eq:infinitesimal-convention}
 \widetilde\phi(w)=\phi\left(w,
 \begin{pmatrix}\|w\|^{1/2}&0\\0&\|w\|^{-1/2}\end{pmatrix}\right),
 \qquad
 N_\phi=d\phi\left(0,\begin{pmatrix}0&1\\0&0\end{pmatrix}\right),
\end{equation}
where $\|\mathrm{Fr}_F\|=q_F^{-1}$.
Thus $\Ad(\widetilde\phi(w))N_\phi=\|w\|N_\phi$, and in the
real logarithmic coordinates of Section~\ref{sec:hecke} this becomes
$[\sigma,N_\phi]=-\log(q_F)N_\phi$.
We use these conventions throughout; see \cite[Section~4]{Sol23}.

For a quasi-split group, an enhancement is an irreducible
representation $\rho$ of
\[
 \mathcal R_\phi:=
 \pi_0\!\left(
 Z_{G^\vee}\bigl(\phi(W_F\times\mathrm{SL}_2(\C))\bigr)
 /Z(G^\vee)^{W_F}\right).
\]
In the equivalent convention using the simply connected cover of the
derived dual group, quasi-split relevance requires the trivial central
character.  The displayed quotient incorporates that condition;
see \cite[Section~4]{Sol23}.

\begin{theorem}\label{thm:main}
For every $\Gamma\in\Ch(\chi)$, the principal-series LLC with the
Whittaker and torus normalizations specified above attaches to
$\pi_\Gamma$ an enhanced parameter $(\phi_\Gamma,\rho_\Gamma)$ which has a representative satisfying
\begin{equation}\label{eq:main-formula}
 \widetilde\phi_\Gamma=\widehat\chi,
 \qquad
 N_\Gamma\sim
 \sum_{\substack{a\in\cS(\chi)\\
                  \Gamma\subset\mathcal H_a^+}}
       X_{-\iota(a)},
 \qquad
 \rho_\Gamma=\triv.
\end{equation}
Here $N_\Gamma:=N_{\phi_\Gamma}$, and $X_{-\iota(a)}$ is an arbitrary
nonzero vector in the one-dimensional
$-\iota(a)$-root space.  Conjugacy in \eqref{eq:main-formula} is by
$Z_{G^\vee}\bigl(\widehat\chi(W_F)\bigr)^\circ$,
the identity component of the centralizer of the fixed
infinitesimal parameter.  All choices of nonzero root vectors
with the displayed support are conjugate under this torus.
The symbol $\triv$ means the one-dimensional trivial representation of
$\mathcal R_{\phi_\Gamma}$; this group need not itself be trivial.
\end{theorem}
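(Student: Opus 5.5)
The plan is to transport the problem through Solleveld's principal-series correspondence \cite[Theorems~5.4, 6.9, and~7.1]{Sol23} to a geometric affine Hecke algebra of Aubert--Moussaoui--Solleveld type \cite{AMS16,AMS17}. There the constituents of the induced module with central character $\widehat\chi$ are the simple heads of geometric standard modules, indexed by pairs $(N,\rho)$. The inertial reductions for ramified $\chi_u$ and nonsplit $T$ replace $G^\vee$ by the possibly disconnected group $H_\chi=Z_{G^\vee}(\widehat{\chi_u}(W_F))$, with maximal torus from the principal-series datum. They also identify the Bernstein coordinate $\kappa(h_a)$ with a character of that torus whose logarithmic differential is the geometric root $\iota(a)$ (Lemma~\ref{lem:root-parameter-scale}). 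In these coordinates the infinitesimal parameter is a semisimple element $\sigma$ determined by $\chi_r$. The eigenvalue condition $[\sigma,N]=-\log(q_F)N$ singles out exactly the root spaces $X_{-\iota(a)}$ with $a\in\cS(\chi)$: the orientation \eqref{eq:positive-exponent} and the scale $s_a/d_a$ from Section~\ref{sec:rank-one} make $-\iota(a)(\sigma)=-\log q_F$ precisely on reducibility lines. Regularity of $\chi$ makes $Z(\sigma)^\circ$ a torus. Linear independence of $\cS(\chi)$ \cite[Theorem~4.1]{Luo21} then shows that the graded piece $\mathfrak g_{-\log q}$ is the direct sum of these root lines. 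The $Z(\sigma)^\circ$-orbits on it are therefore indexed by supports $A\subseteq\cS(\chi)$, with all nonzero choices on a given support conjugate. This proves the final assertion about root vectors.

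Next I match orbits with chambers. Both sides have $2^{|\cS(\chi)|}$ elements: the chambers of an arrangement of linearly independent hyperplanes, and the supports $A$. The number of irreducible constituents equals the number of chambers by Rodier--Luo, and it also equals the number of pairs $(N,\rho)$ in the geometric parametrization. So every orbit carries exactly one enhancement, and since $\triv$ always occurs (it appears in the top homology of the Springer-type fibre), each enhancement is trivial. The component group may be nontrivial when $H_\chi$ is disconnected. For that case I would check directly that its action on the relevant top homology is through a single irreducible character, which multiplicity counting forces to be $\triv$.

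The remaining task is to identify which support $A$ belongs to $\pi_\Gamma$. I would compare weights. The Jacquet module \eqref{eq:chamber-jacquet} becomes, in the Hecke module, the set of $\mathcal A$-weights $\chi^w$ with $wC^+\subset\Gamma$. On the geometric side I would compute the weights of the standard module $M_{N_A}$: they are indexed by fixed points of $\sigma$ on the flag variety of $B^{\vee,-}$-type Borels containing $N_A$, i.e.\ by $w$ with $\Ad(w)^{-1}N_A\in\mathfrak n^{\vee,-}$. Such $w$ are exactly those with $w^{-1}\iota(a)$ of the appropriate sign for all $a\in A$, that is, $wC^+\subset\mathcal H_a^+$ for $a\in A$. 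This uses sign preservation of $\iota$ and its Weyl equivariance across unitary centralizers. Consequently the weights of $M_{N_A}$ form the union of chambers lying in $\bigcap_{a\in A}\mathcal H_a^+$. Möbius inversion over the boolean lattice, via the geometric multiplicity formula, then gives the irreducible head $L_{N_A}$ with weights exactly those of the chamber $\Gamma_A=\bigcap_{a\in A}\mathcal H_a^+\cap\bigcap_{a\notin A}\mathcal H_a^-$. Hence $A=\{a:\Gamma\subset\mathcal H_a^+\}$, which is \eqref{eq:main-formula}. The consequence that $\widetilde\phi_\Gamma=\widehat\chi$ follows because the central character is preserved by the correspondence.

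I expect the main obstacle to be the sign and normalization bookkeeping in the weight comparison, rather than the combinatorics. The errors to guard against are that the roles of $\mathcal H_a^+$ and $\mathcal H_a^-$ get swapped, or that the answer shifts by $w_0$. Sources of such errors are the passage to $B^{\vee,-}$, the reciprocity sign $\mu(\varpi_F^{-1})\leftrightarrow-\mu$, and Solleveld's Whittaker normalization placing the generic constituent at the appropriate orbit. I would first pin these down in the rank-one case $M_\alpha$. There the generic constituent must correspond to $N\neq0$ and the chamber $\mathcal H_a^+$, and that single case fixes all orientations for the general argument.
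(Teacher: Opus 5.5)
Your outline is correct and uses the paper's core mechanism. The $-\log(q_F)$ eigenspace is the direct sum of the independent root lines attached to $\cS(\chi)$, and orbits are supports. Orbit closures are coordinate subspaces, so $[E_K]=\sum_{J\supseteq K}[L_J]$, and M\"obius inversion on the subset lattice pins down the weights. You depart from the paper in two places.

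\textbf{Enhancement.} You get $\rho_\Gamma=\triv$ by counting: $2^{|\cS(\chi)|}$ chambers against $2^{|\cS(\chi)|}$ orbits, each admitting $\triv$ as an eligible enhancement. That holds here because the fixed locus is a nonempty finite set permuted by the component group. The paper instead computes $Z_H(\sigma_v,N_K)=Z(G^\vee)^{W_F}Z_A(N_K)$ and checks that it fixes every fixed coset (Lemma~\ref{lem:eligibility}). That argument is local to each orbit and needs no global count; yours trades it for the Rodier--Luo count and the bijectivity of the geometric parametrization. Your count also needs the $Z_H(\sigma)$-orbits, not merely the $Z(\sigma)^\circ$-orbits, to be the supports. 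This holds because regularity forces $Z_H(\sigma)\subseteq Z_H(A)$ (Lemma~\ref{lem:full-centralizer}), but you must say so: otherwise extra components could merge orbits, and the count would not force one enhancement per orbit.

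\textbf{Weights.} You compute all weights in the fixed $\widehat\chi$-frame. The paper instead picks one exponent $\chi^v$ of $\pi_\Gamma$, reduces at the unitary part of $\widehat{\chi^v}$, uses only the fixed coset $w=1$ (Proposition~\ref{prop:standard-weight}), and conjugates back by $v$ at the end. In your frame the fixed cosets are indexed by $W_{\mathfrak s,u}$ rather than $W$, so you see only part of each Jacquet module. This suffices, since the reduced module of each $\pi_\Gamma$ is nonzero.

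\textbf{Signs.} Under the paper's normalizations, both of your intermediate identifications are off by $w_0$, and the two slips cancel.
\begin{itemize}
\item Since $\Pi_{\mathfrak s}=\Ind_B^G(P_T)$, second adjointness makes the Hecke weights of $\pi_\Gamma$ the exponents of $r_{\overline B}(\pi_\Gamma)$. These are $\chi^v$ with $vC^-\subset\Gamma$, not $wC^+\subset\Gamma$.
\item At the fixed coset of weight $w^{-1}\sigma$, which corresponds to $\chi^w$, the Springer condition $\Ad(w^{-1})N_A\in\Lie(B_u^{\vee,-})$ reads $wC^-\subset\mathcal H_a^+$ for all $a\in A$. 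This is because $\iota$ carries $B$-positivity to $B^{\vee,-}$-positivity. In the $\mathrm{GL}_2$ example with $\chi_+$ and $N=E_{12}$, only $w=w_0$ qualifies, whereas your rule would select $w=1$.
\end{itemize}
With these two facts the orientation $A=\{a:\Gamma\subset\mathcal H_a^+\}$ is forced, so no rank-one calibration is needed. That calibration would also import inputs the paper never uses: the genericity property of the correspondence, the identification of the generic constituent, and compatibility with $M_\alpha$.

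Finally, you should record that $C_\phi\twoheadrightarrow\mathcal R_\phi$ is surjective. That is what turns triviality of the geometric enhancement into triviality of the Langlands enhancement.
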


The chamber containing $C^+$ corresponds to the unique irreducible
subrepresentation of $\Ind_B^G(\chi)$.  The theorem therefore gives its
parameter as follows.

\begin{corollary}[Parameter of the socle]\label{cor:socle}
There is a unique chamber $\Gamma_{\mathrm{soc}}\in\Ch(\chi)$ with
$C^+\subset\Gamma_{\mathrm{soc}}$, and
\[
  \soc\Ind_B^G(\chi)=\pi_{\Gamma_{\mathrm{soc}}}.
\]
Its enhanced parameter may be represented by
\begin{equation}\label{eq:socle-formula}
 \widetilde\phi_{\mathrm{soc}}=\widehat\chi,
 \qquad
 N_{\mathrm{soc}}\sim
 \sum_{\substack{a\in\cS(\chi)\\C^+\subset\mathcal H_a^+}}
 X_{-\iota(a)},
 \qquad
 \rho_{\mathrm{soc}}=\triv.
\end{equation}
Here $N_{\mathrm{soc}}:=N_{\phi_{\mathrm{soc}}}$.
\end{corollary}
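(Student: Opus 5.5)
The corollary follows from Theorem~\ref{thm:main} once two things are established. First, there is exactly one chamber of the reducibility arrangement containing $C^+$. Second, the constituent attached to that chamber is the socle of $\Ind_B^G(\chi)$. With these in hand, \eqref{eq:socle-formula} is \eqref{eq:main-formula} specialized to $\Gamma=\Gamma_{\mathrm{soc}}$. The specialization uses that $\Gamma_{\mathrm{soc}}\subset\mathcal H_a^+$ if and only if $C^+\subset\mathcal H_a^+$: a chamber lies entirely on one side of each wall, and $C^+\subset\Gamma_{\mathrm{soc}}$.

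For uniqueness of the chamber, I would note that each $a\in\cS(\chi)$ is a nonzero multiple of a relative coroot. Hence $\mathcal H_a$ is a wall of the Weyl chamber arrangement, and the open cone $C^+$ does not meet it. Thus $C^+$ is a connected subset of the complement $\mathfrak a_T^*\setminus\bigcup_a\mathcal H_a$. It therefore lies in exactly one connected component, which we call $\Gamma_{\mathrm{soc}}$.

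For the identification with the socle, I would use \eqref{eq:chamber-jacquet}. Since $C^+\subset\Gamma_{\mathrm{soc}}$, the character $\chi=\chi^{1}$ occurs in $r_B(\pi_{\Gamma_{\mathrm{soc}}})$. By Frobenius reciprocity,
\[
\Hom_G\bigl(\pi,\Ind_B^G(\chi)\bigr)\cong\Hom_T\bigl(r_B(\pi),\chi\bigr)
\]
for any irreducible $\pi$. Hence $\pi_{\Gamma_{\mathrm{soc}}}$ embeds in $\Ind_B^G(\chi)$. Conversely, any irreducible subrepresentation $\pi$ must have $\chi$ in its Jacquet module. Regularity of $\chi$ gives multiplicity one of $\chi$ in the semisimplification of $r_B(\Ind_B^G\chi)=\bigoplus_{w}\chi^w$. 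So only one constituent has $\chi$ in its Jacquet module, namely the one whose chamber contains $C^+$. The socle is therefore irreducible and equals $\pi_{\Gamma_{\mathrm{soc}}}$.

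The main point to verify is this Jacquet-module bookkeeping. It relies on the facts quoted from \cite[Theorems~3.4 and~4.1]{Luo21}: the $\pi_\Gamma$ exhaust the constituents, and each occurs with multiplicity one. It also relies on the convention that $\chi^w$ with $w=1$ corresponds to the chamber containing $C^+$. Everything else is a direct reading of Theorem~\ref{thm:main}.
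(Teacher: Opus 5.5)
Your proposal is correct and follows essentially the same route as the paper. The paper first proves Proposition~\ref{prop:socle-chamber} by the same two arguments you give: connectedness of $C^+$, and Frobenius reciprocity combined with the Jacquet formula and regularity. It then specializes Theorem~\ref{thm:main} using the constancy of the sign of $\langle x,a\rangle$ on $\Gamma_{\mathrm{soc}}$. The one small difference is how you exclude $\chi$ from the Jacquet modules of the other constituents. You use multiplicity one of $\chi$ in $r_B(\Ind_B^G\chi)$, whereas the paper reads this directly from \eqref{eq:chamber-jacquet} and regularity.
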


We first recall the chamber description and identify the chamber of the
socle in Section~\ref{sec:chambers}.  Sections~\ref{sec:hecke}
and~\ref{sec:rank-one} pass to the graded Hecke algebra and identify
the roots that can occur in monodromy.  The calculation of standard-module
weights in Section~\ref{sec:fixed-locus} then determines the irreducible
module attached to each chamber in Section~\ref{sec:constituents}.
Section~\ref{sec:parameters} proves the main theorem and discusses the
socle, the conditions of Bella\"iche--Chenevier, and the case of
$\mathrm{GL}_2$.  Section~\ref{sec:unitary-example} treats the
unramified unitary group in three variables.

\section{Chambers, Jacquet modules, and the socle}\label{sec:chambers}

We recall the chamber parametrization by Rodier and Luo.  Rodier treated
split groups in \cite{Rod81}; the formulation below follows from Luo's
extension to generalized principal series.  Since $T$ is a torus, every
irreducible smooth representation of $T(F)$ is supercuspidal, and the
regularity condition in \cite{Luo21} becomes $\Stab_W(\chi)=\{1\}$.
We use the oriented reducibility coroots $\cS(\chi)$ and the
chambers $\Ch(\chi)$ of their hyperplane arrangement, as in
Definition~\ref{def:reducibility-arrangement}.
Changing the sign of a reducibility coroot leaves the arrangement
unchanged.

\begin{theorem}[Rodier; Luo]\label{thm:chambers}
The elements of $\cS(\chi)$ are linearly independent in
$\mathfrak a_T$.  The Jordan--H\"older constituents of
$I(\chi)=\Ind_B^G(\chi)$ are pairwise nonisomorphic and are parametrized by
$\Ch(\chi)$.  Their normalized Jacquet modules are given by
\eqref{eq:chamber-jacquet}; every displayed exponent occurs with
multiplicity one.
\end{theorem}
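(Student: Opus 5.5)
This statement is essentially a repackaging of Luo's results, so the plan is to derive it from \cite[Theorems~3.4 and~4.1]{Luo21} (and \cite{Rod81} in the split case). The one real task is to check that Luo's hypotheses and conventions match ours: the regularity condition, the sign conventions, and the identification of the coroot lattice.

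\textbf{Step 1: specialize Luo's setting.} Luo treats generalized principal series $\Ind_P^G(\sigma)$ with $\sigma$ supercuspidal on a Levi $M$. We take $P=B$, $M=T$ and $\sigma=\chi$. Every irreducible smooth representation of the torus $T(F)$ is a character and is supercuspidal. Luo's regularity hypothesis asks that $w\sigma\not\cong\sigma$ for $w\neq 1$ in $W(G,M)$; here it becomes $\Stab_W(\chi)=\{1\}$, with $W=W(G,S)$ because $Z_G(S)=T$ for quasi-split $G$.

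\textbf{Step 2: match Luo's reducibility set with $\cS(\chi)$.} Luo's set consists of the reduced relative roots $\alpha$ for which the co-rank-one induction $\Ind_{B\cap M_\alpha}^{M_\alpha}(\chi)$ is reducible. This is exactly our set of reducibility lines. Luo orients each line by the sign of the real exponent, as in \eqref{eq:positive-exponent}; the introduction explains why this sign is nonzero. Linear independence is a statement about lines, so it is insensitive to orientation and to the positive rescalings of $H_a$. Hence Luo's independence statement transfers directly to $\cS(\chi)\subset\mathfrak a_T$.

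\textbf{Step 3: constituents, Jacquet modules, multiplicity.} By the geometric lemma, $r_B(I(\chi))=\bigoplus_{w\in W}\chi^w$. Regularity makes these $|W|$ characters pairwise distinct, so every exponent appears with multiplicity one in $I(\chi)$. It follows that constituents are pairwise nonisomorphic: a repeated constituent would contribute a repeated exponent, and each constituent has nonzero Jacquet module because $I(\chi)$ has no cuspidal subquotients. Luo's theorem then says that the constituents correspond bijectively to the chambers of the arrangement cut out by $\cS(\chi)$, with $\chi^w$ lying in the constituent labelled by the chamber containing $wC^+$. Since the hyperplanes $\mathcal H_a$ are walls of the Weyl arrangement, each $wC^+$ lies in exactly one $\Gamma\in\Ch(\chi)$. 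This gives \eqref{eq:chamber-jacquet}.

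\textbf{Main obstacle.} The expected difficulty is the convention check in Step 3: whether $\chi^w$ is assigned to the chamber containing $wC^+$ or the one containing $w^{-1}C^+$, and whether the positivity condition $s_a>0$ corresponds to $\mathcal H_a^+$ or $\mathcal H_a^-$. I would settle this by verifying the $\mathrm{SL}_2$/rank-one case directly. There $I(\chi)$ with $s_a>0$ has socle whose Jacquet module contains $\chi$ itself, the exponent attached to $C^+$. Then I would check compatibility with $\chi^w=\chi\circ\Ad(w)$, using that $\langle wC^+,a\rangle$ and $\langle C^+,w^{-1}a\rangle$ have the same sign.
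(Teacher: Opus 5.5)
Your proposal is correct and takes essentially the paper's route: specialize Luo's Theorems~3.4 and~4.1 to $M=T$, where regularity becomes $\Stab_W(\chi)=\{1\}$, and note that reorienting or rescaling the reducibility coroots changes neither the arrangement nor linear independence. The one translation the paper makes explicit and you skip is that Luo's chambers live in ${}^0\mathfrak a_T^*=V$, the span of the relative roots. They are identified with $\Ch(\chi)$ through $\mathfrak a_T^*=V\oplus(\mathfrak a_T^*)^W$ and $\Gamma_V\mapsto\Gamma_V+(\mathfrak a_T^*)^W$, which preserves the conditions $wC^+\subset\Gamma$ because every coroot vanishes on $(\mathfrak a_T^*)^W$. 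Your geometric-lemma derivation of multiplicity one is a harmless independent check. The orientation worry is moot, since \eqref{eq:chamber-jacquet} does not involve the signs.
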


\begin{proof}
We apply \cite[Theorems~3.4 and~4.1]{Luo21} with $M=T$.
Every reduced relative root contributes a reflection in $W$,
so Luo's space ${}^0\mathfrak a_T^*$ is the span $V$ of the
relative roots; see \cite[Section~2.2]{Luo21}.
We have
\[
 \mathfrak a_T^*=V\oplus Z,
 \qquad Z=(\mathfrak a_T^*)^W.
\]
Every relative coroot vanishes on $Z$.  Hence
\[
 \mathcal H_a=(\mathcal H_a\cap V)\oplus Z,
 \qquad C^+=(C^+\cap V)+Z.
\]
Thus $\Gamma_V\mapsto\Gamma_V+Z$ identifies chambers with
$\Ch(\chi)$ and preserves the containment conditions in
\eqref{eq:chamber-jacquet}.
The induction and Jacquet functors in \cite{Luo21} are normalized.
Changing the signs of the reducibility coroots affects neither
the arrangement nor linear independence.
\end{proof}

Let $\overline B=T\overline U$ be the Borel opposite to $B$.  Conjugating the
Jacquet formula in Theorem~\ref{thm:chambers} by a representative of $w_0$ gives
\begin{equation}\label{eq:opposite-jacquet}
 r_{\overline B}(\pi_\Gamma)=
 \bigoplus_{\substack{v\in W\\vC^-\subset\Gamma}}\chi^v,
\end{equation}
again with multiplicity one.

\begin{proposition}\label{prop:socle-chamber}
The unique chamber containing $C^+$ parametrizes the unique irreducible
subrepresentation of $I(\chi)$.
\end{proposition}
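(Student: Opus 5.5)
Since $\chi$ is regular, $C^+$ is an open Weyl chamber disjoint from every $\mathcal H_a$; indeed each $a\in\cS(\chi)$ is a relative coroot, so it has constant sign on $C^+$. Hence $C^+$ lies in exactly one connected component $\Gamma_{\mathrm{soc}}$ of the complement of the arrangement, which settles the uniqueness of the chamber. To identify the socle, I will use Frobenius reciprocity together with the Jacquet module formula \eqref{eq:chamber-jacquet} of Theorem~\ref{thm:chambers}.

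First, for any irreducible $\pi$, Frobenius reciprocity gives
\[
 \Hom_G\bigl(\pi,I(\chi)\bigr)\cong\Hom_T\bigl(r_B(\pi),\chi\bigr).
\]
Because $\chi$ is regular, the Jacquet module $r_B(I(\chi))$ has the $|W|$ distinct exponents $\chi^w$, each of multiplicity one (geometric lemma). Its semisimplification is therefore $\bigoplus_w\chi^w$, and generalized eigenspaces for distinct characters split off, so $r_B(\pi)$ is semisimple for every subquotient $\pi$. Thus $\pi$ embeds in $I(\chi)$ if and only if $\chi=\chi^1$ occurs in $r_B(\pi)$. By \eqref{eq:chamber-jacquet}, this happens exactly when $1\cdot C^+\subset\Gamma$, that is, when $\Gamma=\Gamma_{\mathrm{soc}}$. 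Consequently $\pi_{\Gamma_{\mathrm{soc}}}$ is a subrepresentation, and no other constituent $\pi_\Gamma$ admits a nonzero map into $I(\chi)$.

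Next, I show that the socle is irreducible. The multiplicity of $\chi$ in $r_B(I(\chi))$ is one, so
\[
 \dim\Hom_G\bigl(\pi_{\Gamma_{\mathrm{soc}}},I(\chi)\bigr)\le 1.
\]
Since the socle is a direct sum of irreducible subrepresentations, each of which must be isomorphic to $\pi_{\Gamma_{\mathrm{soc}}}$ by the previous step, this bound forces the socle to be exactly $\pi_{\Gamma_{\mathrm{soc}}}$. It is nonzero because $I(\chi)$ has finite length.

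The main point needing care is the convention check. I must confirm that Frobenius reciprocity with normalized $r_B$ is stated with $\chi$ itself and not with $\chi^{w_0}$ or with the opposite-parabolic Jacquet module; this is the standard form $\Hom_G(\pi,\Ind_B^G\chi)=\Hom_T(r_B\pi,\chi)$. I also need the indexing convention in \eqref{eq:chamber-jacquet} to be such that $w=1$ labels $\chi$ itself. Both facts are immediate from the definition $\chi^w=\chi\circ\Ad(w)$.
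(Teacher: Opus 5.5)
Your proof is correct and follows essentially the same route as the paper: normalized Frobenius reciprocity, together with the Jacquet formula \eqref{eq:chamber-jacquet} and regularity, shows that $\chi$ is a quotient of $r_B(\pi_\Gamma)$ exactly when $C^+\subset\Gamma$, so only $\pi_{\Gamma_{\mathrm{soc}}}$ embeds. For irreducibility of the socle you use the bound $\dim\Hom_G(\pi_{\Gamma_{\mathrm{soc}}},I(\chi))\le1$, whereas the paper uses the pairwise nonisomorphic (multiplicity-one) constituents of Theorem~\ref{thm:chambers}; these amount to the same thing (and disjointness of $C^+$ from the walls $\mathcal H_a$ needs only that each $a$ is a relative coroot, not regularity).
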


\begin{proof}
Every hyperplane $\mathcal H_a$ is a relative root hyperplane, so the open
Weyl chamber $C^+$ is disjoint from it.  Since $C^+$ is connected, it lies in
a unique chamber $\Gamma_{\mathrm{soc}}$.

Normalized Frobenius reciprocity gives
\[
 \Hom_{G(F)}(\pi_\Gamma,I(\chi))
 \cong \Hom_{T(F)}(r_B(\pi_\Gamma),\chi).
\]
By \eqref{eq:chamber-jacquet} and regularity, the right-hand side is
one-dimensional precisely when $C^+\subset\Gamma$, and is zero otherwise.
Thus $\pi_{\Gamma_{\mathrm{soc}}}$ embeds in $I(\chi)$.  Every irreducible
subrepresentation is one of the pairwise nonisomorphic constituents in
Theorem~\ref{thm:chambers}, so no other constituent embeds.  Hence the socle is
irreducible and equals $\pi_{\Gamma_{\mathrm{soc}}}$.
\end{proof}

In the next section, Bernstein's second adjointness identifies the
exponents in \eqref{eq:opposite-jacquet} with weights of the Hecke module
attached to $\pi_\Gamma$.

\section{Hecke algebras and geometric parameters}
\label{sec:hecke}

Let $\mathfrak s=[T,\chi]_G$ be the Bernstein inertial class of
$(T,\chi)$, and write $\operatorname{Rep}(G(F))^{\mathfrak s}$ for
the corresponding block.  We write
\[
  T_{\mathfrak s}=\chi_0X_{\mathrm{nr}}(T)=\chi X_{\mathrm{nr}}(T)
\]
for the Bernstein torus introduced above.
We choose and fix a unitary base point $\chi_0$ obtained via the torus
LLC from a bounded base point chosen as in
\cite[Sections~4--5]{Sol23}.
The existence of such a choice follows from
\cite[Propositions~3.9(b) and~3.14(b)]{AMS17}.
As in \cite[Section~5]{Sol23}, $\chi_0$ is fixed by the reflection
subgroup.  Hence the identification
\[
 X_{\mathrm{nr}}(T)\xrightarrow{\sim}T_{\mathfrak s},
 \qquad \nu\longmapsto\chi_0\nu,
\]
is equivariant for this subgroup.
All rank-one Bernstein coordinates below are defined relative to this
same base point, as in \eqref{eq:transported-coordinate}.
Let
\[
 P_T=\operatorname{c-Ind}_{T(F)_{\mathrm{cpt}}}^{T(F)}
       (\chi_0|_{T(F)_{\mathrm{cpt}}}),
 \qquad
 \Pi_{\mathfrak s}=\Ind_B^G(P_T).
\]
Set
$\cH_{\mathfrak s}=\operatorname{End}_{G(F)}(\Pi_{\mathfrak s})$.
The coordinate algebra $\cO(T_{\mathfrak s})$ is a maximal
commutative subalgebra of $\cH_{\mathfrak s}$;
see \cite[Section~1]{Sol23}.
The representation $\Pi_{\mathfrak s}$ is a progenerator of the block.
By Morita theory, the functor
\begin{align*}
 M:\operatorname{Rep}(G(F))^{\mathfrak s}
   &\xrightarrow{\ \sim\ }
     \operatorname{Mod}(\cH_{\mathfrak s}^{\mathrm{op}}),\\
 \pi&\longmapsto\Hom_{G(F)}(\Pi_{\mathfrak s},\pi)
\end{align*}
is an equivalence, where $\operatorname{Mod}$ denotes the category
of left modules; see \cite[Section~1, equation~(1-1)]{Sol23}.
The equivalence restricts to finite-length representations in the block
and finite-dimensional modules.  All modules considered below belong
to these finite-length categories.

For an affine complex variety $Y$ and a finite-dimensional module $V$
over an algebra containing $\cO(Y)$, write $V_t$ for its generalized
weight space at $t\in Y$:
\[
 V_t=\{v\in V:\mathfrak m_t^n v=0\text{ for some }n\geq1\},
 \qquad \mathfrak m_t=\{f\in\cO(Y):f(t)=0\}.
\]
Geometrically, the underlying $\cO(Y)$-module defines a coherent
sheaf $\mathcal F$ of finite length on $Y$, and $V_t$ is naturally
identified with its stalk $\mathcal F_t$.  In particular,
\[
 \dim_\C V_t
   =\operatorname{length}_{\cO_{Y,t}}(\mathcal F_t).
\]
We use this notation for both torus coordinate algebras and polynomial
algebras on Lie algebras.

For smooth representations $\tau$ of $T(F)$ and $\pi$ of $G(F)$,
Bernstein's second adjointness gives the natural isomorphism
\[
 \Hom_{G(F)}\bigl(\Ind_B^G(\tau),\pi\bigr)
 \simeq\Hom_{T(F)}\bigl(\tau,r_{\overline B}(\pi)\bigr),
\]
where induction and Jacquet restriction are normalized; see
\cite[Section~2.1, equation~(2-1)]{Sol23}.
Taking $\tau=P_T$ gives
\[
 M(\pi)\simeq
 \Hom_{T(F)}\bigl(P_T,r_{\overline B}(\pi)\bigr).
\]
Thus the generalized $\cO(T_{\mathfrak s})$-weights of $M(\pi)$
are precisely the exponents in the $T_{\mathfrak s}$-summand of
$r_{\overline B}(\pi)$, with their multiplicities and without
inverting the characters.  Since $\Pi_{\mathfrak s}$ generates
the block, $M(\pi_\Gamma)\ne0$.  Combining this with
\eqref{eq:opposite-jacquet}, we find that for every chamber
$\Gamma$ there exists $v\in W$ such that
\begin{equation}\label{eq:selected-v}
 vC^-\subset\Gamma,\qquad
 \chi^v\in T_{\mathfrak s},\qquad
 \dim_\C M(\pi_\Gamma)_{\chi^v}=1.
\end{equation}

Let $\mathfrak s^\vee$ be the inertial class of $\widehat\chi$, namely
its orbit under unramified twists as a torus parameter.  The torus local
Langlands correspondence identifies $T_{\mathfrak s}$ with the parameter
torus of this class, which we denote by $T_{\mathfrak s}^\vee$.
Put $W_{\mathfrak s}=\Stab_W(T_{\mathfrak s})$; torus LLC identifies it
with the extended Weyl group of the dual inertial datum.  In the transported
Bernstein presentation specified below, the Whittaker comparison is
\begin{equation}\label{eq:affine-comparison}
 \psi_{\mathfrak s}:\cH_{\mathfrak s}^{\mathrm{op}}
 \xrightarrow{\ \sim\ }
 \cH(\mathfrak s^\vee,q_F^{1/2}).
\end{equation}
We use the comparison construction of \cite[Theorems~2.7 and~5.4]{Sol23}
and the Galois-side Hecke algebras of \cite{AMS17}, with the following
explicit change of dual coordinates.

Let $C^\vee$ be the Chevalley involution
associated with the chosen Weil-stable pinning.  It acts by inversion
on $T^\vee$, sends $B^\vee$ to $B^{\vee,-}$, and extends to ${}^LG$
by fixing $W_F$; see \cite[Section~1]{Kal13}.

With torus map $C^\vee_*\circ\LLC_T$, the positive primitive
Bernstein coordinates correspond to roots positive for $B^\vee$,
and the two affine-Hecke labels agree under this correspondence;
see \cite[Lemma~5.3]{Sol23}.
By \cite[Theorem~5.4]{Sol23}, these identifications extend to an
isomorphism from the representation-side opposite Hecke algebra
to the dual affine Hecke algebra.
Transport the resulting dual datum, including its inertial class,
coordinate algebra, and Borel, through $C^\vee$.
Since $(C^\vee_*)^2=1$, the resulting comparison $\psi_{\mathfrak s}$ in
\eqref{eq:affine-comparison} satisfies
\[
 \psi_{\mathfrak s}(f)=f\circ\LLC_T^{-1}
 \quad(f\in\cO(T_{\mathfrak s})),
 \qquad \psi_{\mathfrak s}(N_w)=N_{w^{-1}}.
\]
The first identity uses the full torus correspondence, including
central and ramified characters.  The target Borel is $B^{\vee,-}$,
with the same Hecke labels, and the root coordinates agree with
\eqref{eq:transported-coordinate}.

Under the same transport, nilpotent elements are carried by
$dC^\vee$, and enhancements by the induced isomorphisms of
component groups.  The geometric parametrization is compatible
with this action; see \cite[proof of Theorem~7.1]{Sol23}.
The transport introduces no signs on the normalized finite-Hecke
generators, so it preserves the finite-Hecke Steinberg character
and the Whittaker normalization.

Fix a chamber $\Gamma\in\Ch(\chi)$ and choose $v\in W$ satisfying
\eqref{eq:selected-v}.  Keep these choices fixed throughout the reduction
below.
Transport $M(\pi_\Gamma)$ through \eqref{eq:affine-comparison}, and
denote the resulting module over the affine Hecke algebra
$\cH(\mathfrak s^\vee,q_F^{1/2})$ by $V_{\mathrm{aff}}$.
The restriction of \eqref{eq:affine-comparison} to the coordinate
algebra, together with second adjointness and \eqref{eq:opposite-jacquet},
gives the generalized weight set
\begin{equation}\label{eq:affine-weight-set}
 \wt(V_{\mathrm{aff}})=
 \left\{\LLC_T(\chi^w):
 w\in W,\ wC^-\subset\Gamma,\ \chi^w\in T_{\mathfrak s}\right\}.
\end{equation}
Each weight occurs with multiplicity one.  Put
\[
 t_v:=\LLC_T(\chi^v)=u\exp(\sigma_v)
\]
for its unitary--positive polar decomposition.  The module
$V_{\mathrm{aff}}$ is irreducible, and \eqref{eq:affine-weight-set} shows
that its weights lie in $W_{\mathfrak s}t_v$.  The
$W_{\mathfrak s}$-stabilizer of $t_v$ is trivial by the
$W$-equivariance of the torus LLC and the
regularity of $\chi$.

Here $u$ denotes the bounded torus parameter on the whole Weil group;
its dependence on the fixed $v$ is suppressed in the notation.
The geometric group and Borel in the reduction at $u$ are
\[
 G_u^\vee=Z_{G^\vee}\bigl(u(W_F)\bigr),\qquad
 B_u^{\vee,-}=(G_u^{\vee,\circ}\cap B^{\vee,-})^\circ,
\]
where $G_u^{\vee,\circ}$ denotes the identity component of $G_u^\vee$.
Let $\cH_u$ be the extended affine Hecke algebra obtained by reduction
at the unitary part $u$, and let $\mathbb H_u$ be its corresponding
extended graded Hecke algebra.  Its extended Weyl group is
$W_{\mathfrak s,u}=\Stab_{W_{\mathfrak s}}(u)$.
For either kind of Hecke algebra $\mathcal A$, the notation
$\operatorname{Mod}(\mathcal A)_\Omega$ means the full subcategory
of finite-dimensional modules whose generalized weights lie in $\Omega$.
View $X_{\mathrm{nr}}^+(T)$ on the dual side
through the torus LLC, and let $\mathfrak t_{u,\R}$ be its
real tangent space.
Unitary-part reduction and exponential reduction give equivalences
\begin{align}\label{eq:reductions}
 \operatorname{Mod}(\cH_u)_{X_{\mathrm{nr}}^+(T)u}
   &\xrightarrow[\sim]{\operatorname{ind}_u}
   \operatorname{Mod}(\cH(\mathfrak s^\vee,q_F^{1/2}))_{W_{\mathfrak s}X_{\mathrm{nr}}^+(T)u},\\
 \operatorname{Mod}(\mathbb H_u)_{\mathfrak t_{u,\R}}
   &\xrightarrow[\sim]{\exp_{u*}}
   \operatorname{Mod}(\cH_u)_{X_{\mathrm{nr}}^+(T)u},
   \qquad \sigma\longmapsto u\exp(\sigma).\nonumber
\end{align}
Both equivalences preserve irreducibility; see
\cite[Theorem~6.1]{Sol23} and
\cite[Theorem~6.2]{Sol23}, respectively.
The inverse of $\operatorname{ind}_u$ takes the sum of the
generalized weight spaces whose weights lie in $X_{\mathrm{nr}}^+(T)u$;
it retains the selected $t_v$-weight space.
The functor $\exp_{u*}$ preserves the underlying vector space and takes
the $\sigma_v$-weight space to the $t_v$-weight space, with the same
multiplicity; see \cite[Theorem~6.2(b)--(c)]{Sol23}.
Since all weights of
$V_{\mathrm{aff}}$ lie in
$W_{\mathfrak s}t_v\subset W_{\mathfrak s}X_{\mathrm{nr}}^+(T)u$,
the inverse reductions are
applicable to $V_{\mathrm{aff}}$.  We obtain an irreducible
$\mathbb H_u$-module $V_{\mathrm{gr}}$ with generalized weight set
\[
 \wt(V_{\mathrm{gr}})=
 \left\{\sigma\in\mathfrak t_{u,\R}:
 u\exp(\sigma)\in\wt(V_{\mathrm{aff}})\right\}.
\]
Every weight occurs with multiplicity one.  In particular, the selected
weight $t_v=u\exp(\sigma_v)$ gives
\begin{equation}\label{eq:selected-sigma-weight}
  \dim (V_{\mathrm{gr}})_{\sigma_v}=1.
\end{equation}

For a nonsplit torus, $u\exp(\sigma_v)$ is first interpreted in the
dual Bernstein torus.  We identify $\mathfrak t_{u,\R}$ with
$\R\otimes_\Z X_*(A)$ by differentiating the canonical isogeny
\[
 A:=(T^{\vee,W_F})^\circ\longrightarrow
 \bigl((T^{\vee,I_F})_{\mathrm{Fr}_F}\bigr)^\circ
 \simeq X_{\mathrm{nr}}(T);
\]
see \cite[equation~(4-3)]{Sol23}.
The Frobenius subscript denotes coinvariants.  Thus $\sigma_v$ is the
logarithmic lift of the positive part to $\Lie(A)$.
The torus $A$ is maximal in $G_u^{\vee,\circ}$.
For a root $\beta$ relative to $A$, write
$\mathfrak g_\beta=\Lie(G_u^{\vee,\circ})_\beta$.
We view these root spaces inside $\Lie(G^\vee)$; they need not be
absolute root spaces of $G^\vee$.

The next lemma describes the centralizer of a regular parameter. 

\begin{lemma} 
\label{lem:full-centralizer}
With $H=G_u^\vee$ and $M_u:=Z_H(A)$, one has
\begin{equation}\label{eq:full-centralizer-corrected}
 Z_H(\sigma_v)=M_u=(T^\vee)^{W_F}=A Z(G^\vee)^{W_F}.
\end{equation}
The identity component of $M_u$ is $A$, and $\exp(\sigma_v)$ is
regular semisimple in $H^\circ$.
The stabilizer of $\sigma_v$ in $N_H(A)/M_u$ is trivial.
\end{lemma}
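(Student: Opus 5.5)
We will prove the three assertions in order: first the chain of equalities in \eqref{eq:full-centralizer-corrected}, then the structural claims about $M_u$ and $\exp(\sigma_v)$, and finally the triviality of the stabilizer. The main input is the regularity of $\chi$, transported to the dual side: $\Stab_{W_{\mathfrak s}}(t_v)$ is trivial by the $W$-equivariance of $\LLC_T$, as recorded just before \eqref{eq:reductions}.

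We begin with $M_u=Z_H(A)$. Since $u$ takes values in ${}^LT$ and $T^\vee$ is commutative, we have $T^\vee\subset Z_{G^\vee}(u(I_F))$. Moreover $(T^\vee)^{W_F}$ commutes with all of $u(W_F)$, because for $t\in (T^\vee)^{W_F}$ the elements $u(w)$ act on $T^\vee$ through the Weil action. Hence $(T^\vee)^{W_F}\subset Z_H(A)$. For the reverse inclusion, we use that $A=(T^{\vee,W_F})^\circ$ contains regular elements of $G^\vee$ in the quasi-split setting, because the fixed pinning is Weil-stable. Therefore $Z_{G^\vee}(A)=T^\vee$, and so
\[
 Z_H(A)=T^\vee\cap Z_{G^\vee}\bigl(u(W_F)\bigr)=(T^\vee)^{W_F}.
\]
The equality $(T^\vee)^{W_F}=A\,Z(G^\vee)^{W_F}$ is the standard quasi-split fact that $\pi_0((T^\vee)^{W_F})$ is accounted for by the center. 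One sees this by writing $T^\vee$ as a product of induced tori for the $W_F$-action on simple coroots, which is a permutation action for the pinned action, and observing that the failure of connectedness comes from $X^*(T^\vee)$ modulo the root lattice, that is, from the center. We expect this step to be the main obstacle. It needs care with the coinvariants and the isogeny onto $X_{\mathrm{nr}}(T)$, and it is where the quasi-split hypothesis enters. It then follows that $M_u^\circ=A$, since $Z(G^\vee)^{W_F}$ has identity component inside $A$.

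Next, $\sigma_v\in\Lie(A)$, so $Z_H(\sigma_v)\supset M_u$. For the reverse inclusion it suffices to show that no root $\beta$ of $(H^\circ,A)$ vanishes on $\sigma_v$; regularity of $\exp(\sigma_v)$ in $H^\circ$ is the same statement. Suppose $\beta(\sigma_v)=0$. The reflection $s_\beta$ lies in $W(H^\circ,A)\subset W_{\mathfrak s,u}$ and fixes $u$. Since $\sigma_v$ lies in $\ker\beta$, which is exactly the fixed hyperplane of $s_\beta$, the reflection also fixes $\sigma_v$. Hence $s_\beta$ fixes $t_v=u\exp(\sigma_v)$, contradicting the triviality of the stabilizer. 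Thus $Z_{H^\circ}(\sigma_v)^\circ=A$, and $Z_H(\sigma_v)$ normalizes $A$. Its image in $N_H(A)/M_u$ stabilizes $\sigma_v$.

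For the last claim, $N_H(A)/M_u$ embeds in $W_{\mathfrak s,u}=\Stab_{W_{\mathfrak s}}(u)$, via the identification of $W_{\mathfrak s}$ with the extended Weyl group of the dual inertial datum. An element stabilizing $\sigma_v$ therefore fixes both $u$ and $\exp(\sigma_v)$, hence fixes $t_v$, and so it is trivial. This also gives $Z_H(\sigma_v)\subset M_u$, which completes the first equality.
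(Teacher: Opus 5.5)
Your overall strategy is the paper's. You exclude roots of $H^\circ$ vanishing on $\sigma_v$ by a reflection in $W(H^\circ,A)\subset W_{\mathfrak s,u}$ that fixes $t_v$. You get $Z_{G^\vee}(A)=T^\vee$ from the Weil-stable pinning. You kill the stabilizer of $\sigma_v$ in $N_H(A)/M_u$ by sending it to a Weyl element fixing $t_v$. The embedding $N_H(A)/M_u\hookrightarrow W_{\mathfrak s,u}$ is legitimate for two reasons: you have already shown $Z_{G^\vee}(A)=T^\vee$ and $H\cap T^\vee=(T^\vee)^{W_F}$; and an $n\in N_H(A)$ commuting with $u(w)=(t(w),w)$ satisfies $w(n)=t(w)^{-1}nt(w)$, so its Weyl image is Weil invariant.

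\textbf{The gap} is the equality $(T^\vee)^{W_F}=A\,Z(G^\vee)^{W_F}$. You leave it as a heuristic, and the heuristic is wrong as written, because $T^\vee$ is in general not a product of induced tori. For unramified $U_3$, Frobenius acts on $X^*(T^\vee)=\Z^3$ by $e_i\mapsto -e_{4-i}$. The coinvariants then have $2$-torsion, and $(T^\vee)^{W_F}=\{\operatorname{diag}(t,b,t^{-1}):b=\pm1\}$ is disconnected.

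The permutation structure lives on the adjoint torus $T^\vee_{\rm ad}$. Its character lattice has the simple roots of $G^\vee$ as a $W_F$-permuted basis, so its coinvariants are torsion free and $(T^\vee_{\rm ad})^{W_F}$ is connected. What you then need, and do not state, is that $A\to(T^\vee_{\rm ad})^{W_F}$ is surjective:
\begin{itemize}
\item $\Lie(T^\vee)\to\Lie(T^\vee_{\rm ad})$ is surjective;
\item averaging over the finite image of $W_F$ keeps it surjective on invariants;
\item a homomorphism of tori with surjective differential onto a connected target is surjective.
\end{itemize}
Given $m\in(T^\vee)^{W_F}$, choose $a\in A$ with the same image in $T^\vee_{\rm ad}$. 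Then $ma^{-1}\in Z(G^\vee)\cap(T^\vee)^{W_F}=Z(G^\vee)^{W_F}$. This is exactly how the paper argues.

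\textbf{Smaller points.}
\begin{itemize}
\item Your aside $T^\vee\subset Z_{G^\vee}(u(I_F))$ is false when $T$ is ramified, since inertia then acts nontrivially on $T^\vee$. It is unused, so drop it.
\item The claim that $A$ contains regular elements needs its one-line reason. For instance, the average of a positive root over the Weil orbit is a nonzero sum of positive roots, so no root is trivial on $A$. Alternatively, $2\rho_G\in X_*(T^\vee)^{W_F}$ is a regular cocharacter with image in $A$.
\item ``No root vanishes on $\sigma_v$'' gives regularity of $\exp(\sigma_v)$ only because roots take real values on $\sigma_v\in\R\otimes_\Z X_*(A)$. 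Otherwise $\beta(\sigma_v)\in 2\pi i\Z\setminus\{0\}$ would not be excluded.
\end{itemize}
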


\begin{proof}
An element of $W_{\mathfrak s,u}$ fixing $\sigma_v$ fixes
$u\exp(\sigma_v)=t_v$, whose stabilizer is trivial by regularity.
In particular, no root of $H^\circ$ vanishes on $\sigma_v$, and
$Z_{H^\circ}(\sigma_v)=A$.  These roots take real values on $\sigma_v$,
so $\alpha(\exp\sigma_v)=1$ also forces $\alpha(\sigma_v)=0$.
This proves the stated regular semisimplicity.

For the pinned Weil action on $G^\vee$, every absolute positive
root restricts nontrivially to $A$: its average over the finite image
of the Weil action is a nonzero sum of positive roots.  Hence
$Z_{G^\vee}(A)=T^\vee$, and intersecting with $H$ gives
$M_u=(T^\vee)^{W_F}$.
An element of $Z_H(\sigma_v)$ normalizes its connected centralizer
$A$, hence also $T^\vee$.  Its Weyl image is Weil invariant and
therefore represents an element of the relative Weyl group; see
\cite[equation~(5-2)]{Sol23}.  Since the element centralizes both
$u$ and $\sigma_v$, that Weyl image fixes the torus parameter $t_v$.
The Weyl equivariance of torus LLC and regularity of $\chi^v$
force the image to be the identity.  The element therefore belongs
to $T^\vee\cap H=M_u$.  The reverse inclusion is immediate.
This also proves $M_u^\circ=A$ and that the stabilizer of
$\sigma_v$ in $N_H(A)/M_u$ is trivial.

It remains to identify the components of $M_u$.  Let $T^\vee_{\rm ad}$
be the image of $T^\vee$ in the adjoint dual group.  Its character
lattice has a basis of simple roots permuted by the pinned Weil
action.  The coinvariant lattice is consequently torsion free, so
$(T^\vee_{\rm ad})^{W_F}$ is connected.  Exactness of invariants
for the finite Weil action on the Lie algebras shows that
$A\to(T^\vee_{\rm ad})^{W_F}$ has surjective differential and
hence is surjective as a map of complex tori.  Every element of
$M_u$ thus differs from an element of $A$ by an element of
$Z(G^\vee)^{W_F}$, proving $M_u=A Z(G^\vee)^{W_F}$.
\end{proof}

For the geometric graded algebra the principal-series quasi-Levi is
$M_u$, with the trivial equivariant local system on the zero orbit.
The associated quasi-parabolic is $B_u^{\vee,-} Z(G^\vee)^{W_F}$.
Put
\begin{equation}\label{eq:quasi-levi-flag}
 W_u=N_H(A)/M_u.
\end{equation}
Since $M_u=Z_H(A)$, conjugation induces a faithful action of $W_u$ on $A$.
These are the quasi-Levi and parabolic conventions of
\cite[Section~1, equation~(1.1)]{AMS17}.  We record how their Hecke
data agree with those of the reduction above.

There is a natural identification $W_u\cong W_{\mathfrak s,u}$.
Indeed, an element of $N_H(A)$ normalizes
$T^\vee=Z_{G^\vee}(A)$ and its Weyl image fixes the class of $u$.
The kernel is $H\cap T^\vee=M_u$.  Conversely, a Weyl element
fixing the torus-parameter class of $u$ has a representative which,
after multiplication by an element of $T^\vee$, centralizes $u$.
That representative normalizes $A$.  Here we use the Weyl
equivariance of torus LLC as in \cite[Lemma~5.1]{Sol23}.
Let $\Gamma_{u>0}\subset W_u$ be the subgroup preserving the positive
roots determined by $B_u^{\vee,-}$.  Then
\[
 W_u=W(H^\circ,A)\rtimes\Gamma_{u>0},\qquad
 \Gamma_{u>0}\cong H/(H^\circ M_u).
\]
The semidirect-product decomposition is the one in
\cite[Lemma~6.3]{Sol23}.
The second identification follows by conjugating a Borel subgroup
and a maximal torus inside $H^\circ$.  Thus the finite extension uses
the components modulo $H^\circ M_u$; the central components already
contained in $M_u$ do not give additional Weyl elements.

The coefficient system is the constant sheaf on a point with
trivial $M_u$-action.  For every normalizer element its pullback is
identified with itself by the identity on the stalk.  These maps
compose without scalar factors, so the associated $2$-cocycle is
trivial; compare \cite[proof of Theorem~4.2]{Sol23}.  The resulting
finite extension acts by its natural action on $\Lie(A)^*$ and by
permuting the reflection generators, as in
\cite[Proposition~1.1 and equation~(1.9)]{AMS17}.  The polynomial
algebra is the full $\operatorname{Sym}(\Lie(A)^*)$, including the
central directions.  With the root and parameter identifications
of Lemma~\ref{lem:root-parameter-scale}, the reduced algebra
$\mathbb H_u$ is the geometric graded Hecke algebra attached to
$H$, the quasi-Levi $M_u$, and the trivial local system on the zero
orbit, with parameter $\log(q_F)/2$; see
\cite[Section~6.2, equation~(6-4)]{Sol23}.
Its irreducibles have
geometric parameters
\begin{equation}\label{eq:geometric-triple}
 (\sigma,N,\rho),\qquad [\sigma,N]=-\log(q_F)N,
 \qquad \rho\in\Irr\bigl(\pi_0(Z_H(\sigma,N))\bigr),
\end{equation}
where
\[
 Z_H(\sigma,N):=Z_H(\sigma)\cap Z_H(N)
\]
is the simultaneous centralizer of $\sigma$ and $N$.
The enhancement has cuspidal quasi-support $(M_u,0,\triv)$.
In particular, the action of $Z(G^\vee)^{W_F}$
is trivial in the chosen principal-series datum.  The Springer condition
for $H^\circ$ alone would not specify that action.  Section~\ref{sec:fixed-locus}
computes eligibility directly with the quasi-Levi flag variety.

The polynomial central character of the module attached to
$(\sigma,N,\rho)$ is $W_{\mathfrak s,u}\sigma$; see
\cite[Proposition~1.4(b)]{AMS17} and
\cite[Section~6.2, equation~(6-5)]{Sol23}.
Since that of $V_{\mathrm{gr}}$ is $W_{\mathfrak s,u}\sigma_v$,
we may choose its parameter in the form
\[
 (\sigma_v,N,\rho),
\]
by conjugating the entire triple in $H$.

\section{Rank-one reducibility and monodromy}
\label{sec:rank-one}

We compare the rank-one Bernstein coordinates with the roots of
$G_u^{\vee,\circ}$ to determine the $-\log(q_F)$ adjoint eigenspace in
\eqref{eq:geometric-triple}.

Fix $a\in\cS(\chi)$, and let $z_a=\kappa(h_a)$ be the rank-one
Bernstein coordinate determined by the unitary base point $\chi_0$
fixed in Section~\ref{sec:hecke}.
Equations~\eqref{eq:positive-exponent} and
\eqref{eq:lattice-index} give
\begin{equation}\label{eq:z-modulus}
 |z_a(\chi)|=\chi_r(h_a)=q_F^{s_a/d_a}>1.
\end{equation}
Let $\lambda_a\geq\lambda_a^*\geq0$ be the two rank-one
affine-Hecke labels in this primitive coordinate, and put
\[
 Q_a=q_F^{(\lambda_a+\lambda_a^*)/2},\qquad
 Q_a^*=q_F^{(\lambda_a-\lambda_a^*)/2}.
\]
In the coordinate oriented by \eqref{eq:z-modulus}, a rank-one
Macdonald $c$-function and the corresponding Harish-Chandra
$\mu$-function, the latter up to a nonzero constant, are
\begin{equation}\label{eq:rank-one-mu}
 c_a(z)=\frac{(1-Q_a^{-1}z)(1+(Q_a^*)^{-1}z)}{1-z^2},
 \qquad
 \mu_a(z)=\frac{1}{c_a(z)c_a(z^{-1})}.
\end{equation}
This is the usual $c$-function expressed in the inverse root coordinate;
compare \cite[Section~8, equation~(8.2)]{DO08}, with the label
normalization of \cite[Section~1]{Sol23}.  The nonunitary
reducibility points are poles of $\mu_a$, rather than poles of $c_a$.
On the side $|z|>1$ they are precisely
\begin{equation}\label{eq:rank-one-poles}
 z=Q_a,
 \qquad
 z=-Q_a^*\quad\text{if }\lambda_a>\lambda_a^*.
\end{equation}
Indeed, away from $z=\pm1$ the only zeros of the two numerator
factors are $Q_a$ and $-Q_a^*$; the zeros contributed by $c_a(z^{-1})$
are their inverses, which have modulus less than one when they are
nonunitary.  At $\lambda_a=\lambda_a^*$ the factor $1+z$ cancels
from $c_a(z)$, so the apparent negative point $z=-1$ contributes no
pole.  Thus the unitary sign $e_a=z_a(\chi)/|z_a(\chi)|$ is $1$ or
$-1$, and the regular rank-one reducibility criterion gives
\begin{equation}\label{eq:rank-one-exponents}
 \frac{s_a}{d_a}=b_a,
 \qquad b_a:=\frac{\lambda_a+e_a\lambda_a^*}{2}>0.
\end{equation}

For the fixed choice of $v$ in \eqref{eq:selected-v}, transport the coordinate
$z_a$ and its unitary base point by $v^{-1}$, and denote the result by
$z_{a,v}$.  Its logarithmic differential is
\[
 \ell_{a,v}=d\log z_{a,v}\in\Lie(A)^*.
\]
The differential of the torus LLC is understood in this notation.
Because the unitary base point has modulus one,
\begin{equation}\label{eq:affine-log-value}
 \ell_{a,v}(\sigma_v)=\log|z_{a,v}(t_v)|=b_a\log(q_F).
\end{equation}
The corresponding root of $G_u^{\vee,\circ}$ is $v^{-1}\iota(a)$,
where the Weyl action also transports the unitary centralizer and
its parameter data.

\begin{lemma}
\label{lem:root-parameter-scale}
The root lines with positive graded Hecke parameter in the unitary
reduction are in bijection with the root lines of $G_u^{\vee,\circ}$.
On the line of $a\in\cS(\chi)$, the logarithmic differential satisfies
\begin{equation}\label{eq:root-parameter-scale}
 \ell_{a,v}=b_a\,v^{-1}\iota(a).
\end{equation}
The same comparison holds on every root line with positive graded Hecke
parameter, whether or not it is a reducibility line at $t_v$.
\end{lemma}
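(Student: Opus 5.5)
We prove the lemma by tracking a single rank-one coordinate through the two reductions \eqref{eq:reductions} and comparing it with the root datum of the geometric graded Hecke algebra. The first step is the bijection of root lines. By the construction of $\cH_u$ in \cite[Section~6]{Sol23}, the root lines of the unitary reduction are those primitive Bernstein coordinates $z$ with $z(u)=\pm1$ on which the rank-one $\mu$-function \eqref{eq:rank-one-mu}, evaluated near $u$, still has a pole. The graded parameter is then $\lambda_a$ if $z(u)=1$ and $\lambda_a^*$ if $z(u)=-1$; this is the usual Lusztig reduction. It therefore suffices to show that these lines are exactly the lines $\R\beta$, with $\beta$ a root of $H^\circ=G_u^{\vee,\circ}$ relative to $A$. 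Two facts give this. First, by Lemma~\ref{lem:full-centralizer}, $A$ is maximal in $H^\circ$ and $W_u\cong W_{\mathfrak s,u}$, so the reflections in $W(H^\circ,A)$ are the reflections in $W_{\mathfrak s,u}$ that lie in the reflection subgroup. Second, the Whittaker comparison \eqref{eq:affine-comparison}, transported through $C^\vee$ with target Borel $B^{\vee,-}$, identifies the reflection subgroup of $W_{\mathfrak s,u}$ with the reflections of the positive-label lines at $u$, as in \cite[Lemma~5.3 and Theorem~6.1]{Sol23}. Sign compatibility follows from the stated property of $\iota$: it maps the relative positive system to the $B^{\vee,-}$-positive system. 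The factor $v^{-1}$ then enters only through the Weyl equivariance of torus LLC.

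The second step is the scale on a fixed line. On the line of $a$, the coordinate $z_{a,v}$ is a character of the dual Bernstein torus. Its logarithmic differential $\ell_{a,v}$ is therefore a positive rational multiple of the primitive root on that line, restricted to $\Lie(A)$ through the isogeny $A\to((T^{\vee,I_F})_{\mathrm{Fr}_F})^\circ$. In the graded Hecke algebra $\mathbb H_u$, the root attached to this line is normalized by requiring that its reflection relation carry the parameter $\log(q_F)/2$ times the geometric label, as in \cite[equation~(6-4)]{Sol23}. Taking the exponential reduction $\exp_{u*}$ of the rank-one affine relation turns $z_{a,v}-1$ (or $z_{a,v}+1$) into $\ell_{a,v}$ to first order. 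The affine label $\lambda_a$ or $\lambda_a^*$ then becomes the graded label multiplied by $\ell_{a,v}$. Comparing with the geometric normalization of the same relation, in which the root $\beta$ and the label $2$ (or $c(\beta)$) appear, forces $\ell_{a,v}=b\,\beta$ with $b=(\lambda_a+e\lambda_a^*)/2$, where $e=z_{a,v}(u)$. The identification $\beta=v^{-1}\iota(a)$ is the definition of $\iota$ together with transport by $v^{-1}$.

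The third step is to check that $e=e_a$ on reducibility lines. This holds because $z_{a,v}(u)$ is the unitary part of $z_a(\chi)$ after transport, and the unitary base point is $W$-fixed on the reflection subgroup, so it contributes no extra sign. We then obtain $b=b_a$, which agrees with \eqref{eq:affine-log-value} and \eqref{eq:rank-one-exponents}. For lines that are not reducibility lines at $t_v$, the same computation goes through verbatim with $e=z(u)$; nothing in it used reducibility.

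The main obstacle is the second step. It requires that the normalization of roots in the geometric graded Hecke algebra of \cite{AMS17}, which is defined via the quasi-Levi $M_u$ and the adjoint action, matches the factor $1/2$ and the $\lambda^*$-twist produced by Lusztig's reduction. This is especially delicate in nonsplit and non-reduced cases, where $\beta$ and $2\beta$ may both occur. I would first handle these cases by checking the rank-one situation directly: for $\mathrm{SL}_2$ and the quasi-split unitary group in three variables, compute the geometric labels from $\dim\mathfrak g_\beta+2\dim\mathfrak g_{2\beta}$, and then reduce the general case to these rank-one checks via the Levi $M_\alpha$.
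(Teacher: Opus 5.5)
\textbf{There is a genuine gap, and it is at the scale $b_a$, which is the real content of the lemma.}

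A graded cross relation only records the ratio $k/\beta$ of parameter to root. Your Step~2 (``comparing with the geometric normalization forces $\ell=b\beta$'') therefore produces an actual root $\beta$ of $G_u^{\vee,\circ}$ on $A$ only if two things are already known: that the identification of $\mathbb H_u$ with the geometric graded Hecke algebra of $(H,M_u,0,\triv)$ is the identity on $\operatorname{Sym}(\Lie(A)^*)$, and that it gives every root the label $2$ at $r=\log(q_F)/2$. Without that, you only recover that $\ell_{a,v}$ is a positive multiple of the root on its line, which is where you started. In the paper, that identification is stated \emph{with} the root and parameter identifications of this very lemma, so it cannot serve as input. You also leave the required matching open yourself, as your ``main obstacle''.

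The missing ingredient is the explicit geometric computation of the affine labels in \cite[Proposition~3.14]{AMS17}. Put $J=Z_{G^\vee}(u(I_F))$ and take $\gamma\in R(J^\circ,A)_{\mathrm{red}}$. The root character pulls back to $m_\gamma\gamma$ on $A$, so $\ell_0=m_\gamma\gamma$.
When the unitary part is $1$, equation~(3.31) there gives $\lambda_0+\lambda_0^*=2m_\gamma$ and $\delta=\gamma$.
When it is $-1$, equation~(3.33) gives $\lambda_0-\lambda_0^*=2m_\gamma$ or $m_\gamma$, according as $\delta=\gamma$ or $\delta=2\gamma$.
When the chosen coordinate is $\ell_0/2$, the labels become $(m_\gamma,0)$ by \cite[equations~(4-4)--(4-6)]{Sol23}.
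Each case yields $\ell=\tfrac12(\lambda+e\lambda^*)\delta$ directly. The same case analysis gives the bijection of root lines: the parameter vanishes exactly when neither $\gamma$ nor $2\gamma$ is a root of $G_u^{\vee,\circ}$. Only after this does one read off that the geometric parameter is $\log(q_F)$.

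\textbf{Your fallback does not close the gap.} Even granting a reduction to $M_\alpha$, checking $\mathrm{SL}_2$ and unramified $U_3$ leaves out several cases: ramified characters (where $u(I_F)\neq1$ and $J\neq G^\vee$), restrictions of scalars, ramified unitary groups, and other isogeny types such as $\mathrm{PGL}_2$, which is where the $\ell_0/2$ relabeling arises. The quantity $\dim\mathfrak g_\beta+2\dim\mathfrak g_{2\beta}$ is also not the relevant label. $R(G_u^{\vee,\circ},A)$ is reduced with one-dimensional root spaces, and for the zero cuspidal orbit every root has label $2$. The $\gamma$ versus $2\gamma$ issue lives between $R(J^\circ,A)$ and $R(G_u^{\vee,\circ},A)$, not inside $H^\circ$.

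\textbf{Step~1 also contradicts your own Step~2.} The graded parameter at $u$ is $\tfrac12(\lambda+e\lambda^*)\log(q_F)$, not $\lambda$ or $\lambda^*$. Moreover, the rank-one condition is a zero of $\mu$ (a pole of $c$), not a pole of $\mu$. With your formula, $\mathrm{SL}_2$ at the unramified quadratic character ($\lambda=\lambda^*=1$, $e=-1$) would count as a positive-parameter root line. But there $G_u^{\vee,\circ}$ is a torus and the reflection lies in $\Gamma_{u>0}$, so the bijection as you state it fails.
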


\begin{proof}
Let $z$ be a Bernstein coordinate in the unitary reduction, with labels
$\lambda,\lambda^*$ and unitary part $e\in\{1,-1\}$.  After exponential
reduction, its logarithmic differential $\ell=d\log z$ has graded Hecke
parameter
\[
 k_\ell^u=\tfrac12(\lambda+e\lambda^*)\log(q_F).
\]
The roots with $k_\ell^u>0$ form the root system of the graded
Hecke factor of $\mathbb H_u$.  Its finite extension by
$\Gamma_{u>0}$ uses the action described in Section~\ref{sec:hecke};
see \cite[Lemma~6.3]{Sol23}.

Put $J=Z_{G^\vee}(u(I_F))$.
Let $\gamma\in R(J^\circ,A)_{\mathrm{red}}$ be the reduced
restriction on the line under consideration.  Thus $\gamma$ is
chosen before imposing the bounded Frobenius condition; the actual
root of $G_u^{\vee,\circ}$ is denoted by $\delta$.
The root character in the affine root datum pulls back to
$m_\gamma\gamma$ on $A$, for a positive integer $m_\gamma$.
Its logarithmic differential is therefore $\ell_0=m_\gamma\gamma$;
see \cite[Definition~3.11]{AMS17} and
\cite[equation~(4-3)]{Sol23}.
Let $\lambda_0,\lambda_0^*$ be its labels.  The chosen coordinate has
logarithmic differential $\ell=\ell_0$ or $\ell_0/2$, the latter when
the root character is divisible by $2$ in the Bernstein character lattice.
We first compare roots before this replacement.

The principal-series cuspidal orbit is zero, so each root of
$G_u^{\vee,\circ}$ has geometric Hecke label $2$.
When the unitary part of the original coordinate is $1$, one has
\[
 \lambda_0+\lambda_0^*=2m_\gamma,
 \qquad \delta=\gamma,
 \qquad
 \ell_0=\tfrac12(\lambda_0+\lambda_0^*)\delta;
\]
see \cite[Proposition~3.14, equation~(3.31)]{AMS17}.
When that unitary part is $-1$, one has
\[
 \lambda_0-\lambda_0^*=
 \begin{cases}
 2m_\gamma,&\delta=\gamma,\\
 m_\gamma,&\delta=2\gamma.
 \end{cases}
\]
See \cite[Proposition~3.14, equation~(3.33)]{AMS17}.
In either case,
$\ell_0=\tfrac12(\lambda_0-\lambda_0^*)\delta$.
These are the cases $\delta=\gamma$ and $\delta=2\gamma$ in
\cite[equations~(2.18)--(2.20)]{AMS17}.
If neither $\gamma$ nor $2\gamma$ is a root of $G_u^{\vee,\circ}$,
the graded Hecke parameter is zero.

When $\ell=\ell_0/2$, the labels must be changed as well.
In this case $\lambda_0=\lambda_0^*=m_\gamma$, and the labels in
the new coordinate are $(\lambda,\lambda^*)=(m_\gamma,0)$;
see \cite[equations~(4-4)--(4-6) and Section~5, before
Lemma~5.3]{Sol23}.  Both unitary signs of the new coordinate map to
unitary part $1$ in the original coordinate.
Thus $\delta=\gamma$ and
\[
 \ell=\frac{m_\gamma}{2}\gamma
      =\frac{\lambda+e\lambda^*}{2}\delta
      \qquad(e=\pm1).
\]
All labels below refer to the chosen Bernstein coordinate.

For each root $\delta$ of $G_u^{\vee,\circ}$, we have $\ell=b\delta$, with
$b=(\lambda+e\lambda^*)/2>0$.  The graded cross relation has
coefficient $k_\ell^u/\ell=\log(q_F)/\delta$.  Thus the graded Hecke
parameter attached to $\delta$ is $\log(q_F)$, corresponding to the scalar
$\log(q_F)/2$ in the geometric realization; see
\cite[Lemma~6.4 and equation~(6-4)]{Sol23}.
The comparison preserves the polynomial algebra and its weights.
Applied to the oriented line of $a$, it proves
\eqref{eq:root-parameter-scale} and the asserted bijection.
\end{proof}

Combining \eqref{eq:affine-log-value} with
\eqref{eq:root-parameter-scale} yields
\begin{equation}\label{eq:positive-graded-root}
 [\sigma_v,X]=\log(q_F)X
 \qquad(0\ne X\in\mathfrak g_{v^{-1}\iota(a)}).
\end{equation}
Consequently, with $\beta_{a,v}=-v^{-1}\iota(a)$,
\begin{equation}\label{eq:monodromy-root-sign}
 [\sigma_v,Y]=-\log(q_F)Y
 \qquad(0\ne Y\in\mathfrak g_{\beta_{a,v}}).
\end{equation}
Both root spaces are those of $G_u^{\vee,\circ}$ relative to $A$.

\begin{proposition}\label{prop:monodromy-eigenspace}
The roots $\beta$ of $G_u^{\vee,\circ}$ satisfying
$\beta(\sigma_v)=-\log(q_F)$ are exactly
\[
 \Sigma_v=\{\beta_{a,v}:a\in\cS(\chi)\}.
\]
They are linearly independent, and
\begin{equation}\label{eq:full-monodromy-space}
 \{Y\in\Lie(G_u^{\vee,\circ}):[\sigma_v,Y]=-\log(q_F)Y\}
   =\bigoplus_{a\in\cS(\chi)}\mathfrak g_{\beta_{a,v}}.
\end{equation}
With positivity defined by
$B_u^{\vee,-}=(G_u^{\vee,\circ}\cap B^{\vee,-})^\circ$,
\begin{equation}\label{eq:positivity-chamber}
 \beta_{a,v}>0
 \quad\Longleftrightarrow\quad
 \Gamma\subset\mathcal H_a^+.
\end{equation}
\end{proposition}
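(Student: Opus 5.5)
The plan has four steps: characterize the roots by their value on $\sigma_v$, deduce the eigenspace decomposition and linear independence, and then compare positivity systems to get \eqref{eq:positivity-chamber}.

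\emph{Step 1: roots taking the value $-\log(q_F)$.} Lemma~\ref{lem:root-parameter-scale} puts the root lines of $G_u^{\vee,\circ}$ in bijection with the root lines having positive graded Hecke parameter in the unitary reduction. On each such line write the logarithmic differential as $\ell=b\,\delta$ with $b=(\lambda+e\lambda^*)/2>0$. A root $\beta=\pm\delta$ satisfies $\beta(\sigma_v)=-\log(q_F)$ exactly when $|\ell(\sigma_v)|=b\log(q_F)$. I then translate this back through $\exp_{u*}$ and $\operatorname{ind}_u$: the condition says that the relevant Bernstein coordinate $z$ evaluated at $t_v$ has unitary part $e$ and modulus $Q$ or $Q^*$, where $e=-1$ corresponds to $Q^*$. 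By \eqref{eq:rank-one-poles}, $z(t_v)$ is then a pole of $\mu$ in its coordinate. The regular rank-one criterion of Section~\ref{sec:rank-one}, transported by $v$, identifies these poles with the reducibility of $\Ind_{B\cap M_\alpha}^{M_\alpha}(\chi)$ on the corresponding line. The line therefore carries some $a\in\cS(\chi)$.

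\emph{Step 2: sign and scale, and the eigenspace.} Conversely, \eqref{eq:affine-log-value} and \eqref{eq:root-parameter-scale} give \eqref{eq:monodromy-root-sign}, so each $\beta_{a,v}$ has the required value. On a line of $\cS(\chi)$, only one of $\pm v^{-1}\iota(a)$ can take the value $-\log(q_F)$, since the values on $\sigma_v$ are $\pm b_a\log q_F$ scaled consistently; multiples $2\delta$ cannot also occur because the lemma fixes $\delta$ on each line. Roots of $G_u^{\vee,\circ}$ on distinct lines are distinct, so $\Sigma_v$ is exactly the set of roots with the required value. Since $A$ is a maximal torus and $\sigma_v\in\Lie(A)$ with $Z(\sigma_v)^\circ=A$ (Lemma~\ref{lem:full-centralizer}), the $-\log(q_F)$ eigenspace is the sum of the corresponding root spaces, and $\Lie(A)$ contributes only the eigenvalue $0$. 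This proves \eqref{eq:full-monodromy-space}.

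\emph{Step 3: linear independence.} Theorem~\ref{thm:chambers} gives linear independence of $\cS(\chi)$ in $\mathfrak a_T$. The map $a\mapsto\ell_{a,v}$ is the composite of the linear transport by $v^{-1}$ with the differential of the torus LLC on $X_{\mathrm{nr}}$-coordinates, which is a linear isomorphism onto its image. Each $\beta_{a,v}$ is a nonzero multiple $-b_a^{-1}\ell_{a,v}$, so the $\beta_{a,v}$ are independent as well.

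\emph{Step 4: positivity, which I expect to be the main obstacle.} By Section~\ref{sec:hecke}, $\iota$ carries the relative positive system for $B$ to the positive system for $B^{\vee,-}$. Hence $\beta_{a,v}=-v^{-1}\iota(a)$ is positive for $B_u^{\vee,-}$ if and only if $v^{-1}a$ is a negative relative coroot, that is, $\langle C^+,v^{-1}a\rangle<0$. The main difficulty is carrying this sign correctly through the transport of the unitary centralizer by $v^{-1}$ and the intersection with $G_u^{\vee,\circ}$. Once that is settled, the criterion reduces to $\langle vC^-,a\rangle>0$, using $C^-=-C^+$ on the span of the roots. Since $vC^-\subset\Gamma$ by \eqref{eq:selected-v} and $\Gamma$ does not meet $\mathcal H_a$, this is equivalent to $\Gamma\subset\mathcal H_a^+$. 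I would first check the sign conventions on $\mathrm{GL}_2$ with $v=1$, where $C^-$ lies on the negative side exactly when the socle chamber is positive, and then deduce the general case from the Weyl equivariance of $\iota$.
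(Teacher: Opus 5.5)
Your proposal is correct and follows the paper's proof essentially step for step. The converse inclusion goes through Lemma~\ref{lem:root-parameter-scale} and the poles \eqref{eq:rank-one-poles}; orienting the coordinate so that $\ell(\sigma_v)>0$, as the paper does by putting $\delta=-\beta$, removes your absolute values, and $b>0$ with $e=-1$ gives $\lambda>\lambda^*$, so the second pole exists. The eigenspace comes from the root-space decomposition, linear independence from Theorem~\ref{thm:chambers} up to nonzero rescalings, and positivity from $\beta_{a,v}=-\iota(v^{-1}a)$. The sign transport you flag as the main obstacle is settled in one line by the stated Weyl equivariance of $\iota$ together with its sign convention from $B$ to $B^{\vee,-}$, so no $\mathrm{GL}_2$ check is needed.
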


\begin{proof}
Equation~\eqref{eq:monodromy-root-sign} supplies every root displayed
in $\Sigma_v$.  Conversely, suppose
$\beta(\sigma_v)=-\log(q_F)$ and put $\delta=-\beta$.
By Lemma~\ref{lem:root-parameter-scale}, there is a Bernstein coordinate
$z$ with logarithmic differential $\ell=b\delta$ and graded Hecke
parameter $b\log(q_F)$, where
$b=(\lambda+e\lambda^*)/2>0$ and $e\in\{1,-1\}$ is its
unitary part.  Its value at $t_v$ is therefore
\[
 z(t_v)=e\exp(\ell(\sigma_v))=e q_F^b.
\]
For $e=1$ this is the first pole of \eqref{eq:rank-one-poles}; for
$e=-1$ the inequality $b>0$ implies $\lambda>\lambda^*$ and it is
the second pole.  Thus the associated co-rank-one induction is
reducible.  Transporting the line back by $v$ gives a unique
$a\in\cS(\chi)$: its prescribed orientation is forced by
$\ell(\sigma_v)>0$.  The same root comparison then gives
$\beta=\beta_{a,v}$.  This proves the equality of root sets.

The root-space decomposition proves
\eqref{eq:full-monodromy-space}; its toral summand has eigenvalue
zero and hence contributes nothing.  Linear independence follows
from Theorem~\ref{thm:chambers}: the correspondence on root lines is the
linear differential of the torus LLC composed with the Weyl action,
and each individual root is changed only by a nonzero scale and a
sign.

To determine the chamber side, Weyl equivariance gives
\[
 \beta_{a,v}=-v^{-1}\iota(a)=-\iota(v^{-1}a).
\]
The sign convention in \eqref{eq:transported-coordinate} is
positive from $B$ to $B^{\vee,-}$.  Hence
$\beta_{a,v}$ is positive for $B_u^{\vee,-}$ exactly when $v^{-1}a$
is negative in the relative system.  Since $C^-$ is
anti-dominant, this is equivalent to
$vC^-\subset\mathcal H_a^+$.  Finally $vC^-\subset\Gamma$, and
$\Gamma$ is connected and disjoint from $\mathcal H_a$, so the
last condition is equivalent to
$\Gamma\subset\mathcal H_a^+$.  This proves
\eqref{eq:positivity-chamber}.
\end{proof}

For the fixed choices of $\Gamma$ and $v$, put
\begin{equation}\label{eq:P-definition}
 \Sigma_v^+=
 \{\beta_{a,v}\in\Sigma_v:\Gamma\subset\mathcal H_a^+\}.
\end{equation}
By Proposition~\ref{prop:monodromy-eigenspace}, this is precisely
the subset of $\Sigma_v$ positive for $B_u^{\vee,-}$.

\section{Weights of geometric standard modules}
\label{sec:fixed-locus}

We compute weight multiplicities using the quasi-Levi construction
of \cite[Section~1]{AMS17} and \cite[Sections~1 and~3]{Sol21}.

For the moment let $H$ be a possibly disconnected complex reductive
group, let $A\subset H^\circ$ be a maximal torus, and assume
\[
 M:=Z_H(A)=A Z(H).
\]
Let $B^\circ=A U^\circ$ be a Borel subgroup of $H^\circ$.
The quasi-Levi $M$ has associated quasi-parabolic
\begin{equation}\label{eq:quasi-parabolic-choice}
 P=M U^\circ=B^\circ Z(H),
 \qquad P\cap H^\circ=B^\circ.
\end{equation}
Equip the zero orbit with the constant local system on which $M$
acts trivially.  Write $N_H(P,M,\triv)$ for the subgroup preserving
$P$, $M$, and the isomorphism class of this equivariant local system.
Since the local system is trivial,
$N_H(P,M,\triv)=N_H(P)\cap N_H(M)$.
Assume
\begin{equation}\label{eq:borel-normalization}
 H=N_H(P,M,\triv)H^\circ.
\end{equation}
This is \cite[Condition~1.2]{Sol21}.
Put $W_H=N_H(A)/M=N_H(A)/Z_H(A)$.
Conjugation induces a faithful action of $W_H$ on $A$.
Let $r\in\C$, let $y\in\Lie(H^\circ)$ be nilpotent, and let
$\sigma\in\Lie(A)$ be semisimple, with
\[
 [\sigma,y]=2ry,\qquad Z_H(\sigma)=M,
\]
and suppose $\exp(\sigma)$ is regular semisimple in $H^\circ$.
Define
\[
 \cP_y=\{gP\in H/P:
                \Ad(g^{-1})y\in\Lie(U^\circ)\}.
\]
For nilpotent $y$, membership in $\Lie(B^\circ)$ is equivalent to
membership in $\Lie(U^\circ)$.
The projection $H/B^\circ\to H/P$ identifies the copies indexed by
$M/A$; the $\exp(\C\sigma)$-fixed cosets in $H/P$ are indexed by $W_H$.

Set
\[
 M(y)=\{(h,z)\in H\times\C^\times:
                         \Ad(h)y=z^2y\}.
\]
Thus $M(y)=Z_{H\times\C^\times}(y)$ for the action
$(h,z)\cdot x=z^{-2}\Ad(h)x$, in the notation of
\cite[equation~(3.1)]{Sol21}.
This group acts on $\cP_y$ by $(h,z)\cdot gP=hgP$.
We use complex coefficients throughout: $H^*_L$ denotes equivariant
cohomology and $H_*^L$ denotes equivariant Borel--Moore homology.
The relation $[\sigma,y]=2ry$ places the semisimple element
$(\sigma,r)$ in $\Lie(M(y)^\circ)$.  Let $\C_{\sigma,r}$ be the
one-dimensional $H^*_{M(y)^\circ}(\mathrm{pt})$-module given by evaluation
at its semisimple conjugacy class in the reductive quotient of this
Lie algebra; see \cite[equations~(3.3)--(3.4)]{Sol21}.
Define
\[
 E(y,\sigma,r)=
 \C_{\sigma,r}\otimes_{H^*_{M(y)^\circ}(\mathrm{pt})}
 H_*^{M(y)^\circ}(\cP_y,\C).
\]
The action of $Z_{M(y)}(\sigma,r)$ on the specialized homology
factors through its component group.  The natural isomorphism
\[
 \pi_0\bigl(Z_{M(y)}(\sigma,r)\bigr)
 \cong \pi_0\bigl(Z_H(\sigma,y)\bigr)
\]
therefore equips $E(y,\sigma,r)$ with an action of
$\pi_0(Z_H(\sigma,y))$; see
\cite[Section~3, equations~(3.5)--(3.7)]{Sol21}.

\begin{definition}[Eligible enhancements]
\label{def:eligible-enhancement}
We call $\rho\in\Irr\bigl(\pi_0(Z_H(\sigma,y))\bigr)$ \emph{eligible}
for the chosen quasi-Levi datum $(M,0,\triv)$ if
\[
 \Hom_{\pi_0(Z_H(\sigma,y))}
 \bigl(\rho,E(y,\sigma,r)\bigr)\ne0.
\]
\end{definition}

Equivalently, $\rho$ is eligible precisely when the cuspidal
quasi-support attached to $(y,\sigma,r,\rho)$ is $H$-conjugate
to $(M,0,\triv)$; see \cite[Theorem~1.6]{AMS17}.

The weights at the fixed points are as follows.\nopagebreak[4]

\begin{lemma} 
\label{lem:fixed-locus}

The $\exp(\C\sigma)$-fixed locus is the finite reduced set
\[
 \cP_y^\sigma=\{wP:w\in W_H,
                     \Ad(w^{-1})y\in\Lie(U^\circ)\}.
\]
The condition is independent of the representative of $w$ in $N_H(A)$,
since $M$ normalizes $U^\circ$.
There is an identification compatible with the polynomial and
component-group actions
\begin{equation}\label{eq:fixed-locus-homology}
 E(y,\sigma,r)\cong H_*(\cP_y^\sigma,\C).
\end{equation}
On the summand at $wP$, the polynomial character is
\begin{equation}\label{eq:fixed-point-character}
 p\longmapsto p(w^{-1}\sigma),
 \qquad p\in\operatorname{Sym}(\Lie(A)^*).
\end{equation}
\end{lemma}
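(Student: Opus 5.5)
First identify the fixed locus. The torus $S_\sigma:=\overline{\exp(\C\sigma)}\subset A$ acts on $H/P$ by left multiplication, and $gP$ is fixed exactly when $g^{-1}S_\sigma g\subset P$. Since $\exp(\sigma)$ is regular semisimple in $H^\circ$, the centralizer of $S_\sigma$ in $H^\circ$ is $A$. Write $g=h g_0$ with $h\in N_H(P,M,\triv)$ and $g_0\in H^\circ$, which is possible by \eqref{eq:borel-normalization}. The fixed-point condition then reduces to a fixed-point condition on the flag variety $H^\circ/B^\circ$ of $H^\circ$. There the fixed points of a regular torus are the cosets $w B^\circ$ with $w\in W(H^\circ,A)$. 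Passing to $H/P$ through $H/B^\circ\to H/P$, which identifies the copies indexed by $M/A$, gives the fixed set $\{wP:w\in W_H\}$ in $H/P$, as already noted before the lemma. Intersecting with $\cP_y$ (which is $S_\sigma$-stable because $[\sigma,y]=2ry$, so $\exp(t\sigma)$ rescales $y$) gives exactly the stated set. It is reduced because each $wP$ is an isolated fixed point of a torus acting on a smooth variety $H/P$, and $\cP_y^\sigma$ is a closed subset of these isolated points. Independence of the representative holds because $M$ normalizes $U^\circ$.

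For \eqref{eq:fixed-locus-homology}, I would use equivariant localization in Borel--Moore homology, as in Lusztig's and Kazhdan--Lusztig's approach, in the form of \cite[Section~3]{Sol21}. Let $\mathfrak T$ be the closure of the torus generated by $\exp(\sigma,r)$ inside $M(y)^\circ$. Specializing $H_*^{M(y)^\circ}(\cP_y)$ at $(\sigma,r)$ agrees with specializing $H_*^{Z_{M(y)^\circ}(\sigma,r)}(\cP_y)$ and then $H_*^{\mathfrak T}(\cP_y)$. Localization at the point $(\sigma,r)$ then identifies this with $\C_{\sigma,r}\otimes H_*^{\mathfrak T}(\cP_y^{\mathfrak T})$. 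Here $\cP_y^{\mathfrak T}=\cP_y^\sigma$ is finite, so the result is $H_*(\cP_y^\sigma,\C)$. The component group $\pi_0(Z_H(\sigma,y))$ acts on both sides through the fixed points, and localization is natural for the centralizer of $(\sigma,r)$, so the action is compatible. I expect the main obstacle to be this step. One must check that the specialization and the localization commute. Concretely, $H_*^{M(y)^\circ}(\cP_y)$ should be a free module over $H^*_{M(y)^\circ}$, or at least behave well after completing at the point $(\sigma,r)$. I would try to import this from \cite[Section~3]{Sol21} and \cite{KL87}, relying on the evenness and purity of $\cP_y$ in the principal-series case.

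Finally, the polynomial character. The algebra $\operatorname{Sym}(\Lie(A)^*)$ acts through the $H$-equivariant Chern classes of the line bundles on $H/P$ attached to characters of $P/U^\circ\cong M$, which for Lie-algebra purposes is $A$. At the fixed point $wP$, the stalk of such a line bundle is a $\mathfrak T$-line whose weight is the character evaluated on $\Ad(w^{-1})$ applied to the torus element. Hence $p$ acts on the summand at $wP$ by $p(w^{-1}\sigma)$. A sign or inverse convention might make this $w\sigma$ rather than $w^{-1}\sigma$. I would fix the convention by checking the case $y=0$: there $\cP_0=H/P$, and the fixed summand at $P$ itself must carry the character $\sigma$.
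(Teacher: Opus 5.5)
Your proposal is correct and follows essentially the paper's route. You determine the torus-fixed cosets of $H/P$ as $N_H(A)/M=W_H$, impose the Springer condition, take the identification $E(y,\sigma,r)\cong H_*(\cP_y^\sigma,\C)$ from the specialization/localization result \cite[Proposition~3.5(a)]{Sol21}, and read off the polynomial character by restricting equivariant classes at $wP$. Your reduction to $H^\circ/B^\circ$ uses the normalization condition \eqref{eq:borel-normalization}, whereas the paper argues directly that $S\subset gB^\circ g^{-1}$ forces $A\subset gB^\circ g^{-1}$ and then conjugates $gAg^{-1}$ to $A$ inside that Borel; both arguments work.

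One small caveat concerns your proposed $y=0$ check at the coset $P$. It fixes the overall sign of the character, but it cannot distinguish $w\sigma$ from $w^{-1}\sigma$, because $w=1$ there. That choice is settled by your own fibre computation, in which the weight at $wP$ is the character composed with $\Ad(w^{-1})$.
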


\begin{proof}
Let $S$ be the Zariski closure of $\exp(\C\sigma)$ in $A$.
Since $\exp(\C\sigma)$ is connected, $S$ is a subtorus of $A$.
If $gP$ is $S$-fixed, then $S\subset gPg^{-1}$; since $S$ is
connected, $S\subset gB^\circ g^{-1}$.  Regularity implies that this
Borel contains $A$.  Its maximal tori $A$ and $gAg^{-1}$ are
conjugate within it, so $gP$ has a representative in $N_H(A)$.
Conversely every such representative yields an $S$-fixed coset.
Two representatives yield the same coset precisely when they differ
by $N_H(A)\cap P=M$.  Hence
\[
 (H/P)^S=N_H(A)/M=W_H.
\]
The fixed locus of a torus in the smooth variety $H/P$ is smooth;
here it is finite, hence reduced.  Imposing the Springer condition
gives the displayed subset, which is again reduced.  The same argument
applies to $\exp(\sigma)$.

For $P=MU^\circ$ as in \eqref{eq:quasi-parabolic-choice}, specialization
in the equivariant construction identifies the module with fixed-locus
homology by \cite[Proposition~3.5(a)]{Sol21}.  Restriction of an equivariant
polynomial class at $wP$ evaluates it on
$\Ad(w^{-1})\sigma=w^{-1}\sigma$.  This gives
\eqref{eq:fixed-point-character}, compatibly with the action of
$Z_H(\sigma,y)$ on the fixed points and on the coefficients.
\end{proof}

Return to $H=G_u^\vee$ and $M=M_u$.  Under this specialization,
the group $W_H$ defined above is precisely $W_u=N_H(A)/M_u$.
Lemma~\ref{lem:full-centralizer}
implies $M_u=A Z(H)$, since $Z(G^\vee)^{W_F}\subset Z(H)$.
We verify \eqref{eq:borel-normalization}.  By conjugacy of maximal
tori in $H^\circ$, every connected component of $H$ contains an
element $h\in N_H(A)$.  Since the Weyl group $N_{H^\circ}(A)/A$
acts transitively on the Borel subgroups of $H^\circ$ containing $A$,
there exists $n\in N_{H^\circ}(A)$ such that
\[
 (nh)B_u^{\vee,-}(nh)^{-1}=B_u^{\vee,-}.
\]
The element $nh$ lies in the same connected component as $h$ and
normalizes both $A$ and $B_u^{\vee,-}$.  It therefore normalizes
$M_u=Z_H(A)$ and $P=B_u^{\vee,-} Z(G^\vee)^{W_F}$, and preserves
the trivial equivariant local system.  Thus
\[
 H=N_H(P,M_u,\triv)H^\circ.
\]

For $K\subseteq\Sigma_v$ put
\begin{equation}\label{eq:NK}
 N_K=\sum_{\beta\in K}X_\beta,
 \qquad 0\ne X_\beta\in\mathfrak g_\beta,
 \qquad r=\frac{\log(q_F)}2.
\end{equation}
Then $[-\sigma_v,N_K]=2rN_K$.

\begin{lemma}[The component-group action]\label{lem:eligibility}
Each $N_K$ is nilpotent.  The trivial representation is the unique
eligible enhancement for the quasi-Levi datum $(M_u,0,\triv)$ for
each of the triples
\[
 (N_K,-\sigma_v,r)\qquad\text{and}\qquad(N_K,\sigma_v,-r).
\]
More precisely,
\begin{equation}\label{eq:pair-centralizer-with-centre}
 \begin{gathered}
 Z_H(\sigma_v,N_K)=Z_H(-\sigma_v,N_K)=Z(G^\vee)^{W_F} Z_A(N_K),
 \\
 Z_A(N_K)=\bigcap_{\beta\in K}\ker(\beta:A\to\C^\times),
 \end{gathered}
\end{equation}
and $\pi_0(Z_H(\sigma_v,N_K))$ acts trivially on both
\[
 E(N_K,-\sigma_v,r)\qquad\text{and}\qquad E(N_K,\sigma_v,-r).
\]
\end{lemma}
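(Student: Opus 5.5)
Nilpotency of $N_K$ comes first, and it is routine. Each $X_\beta$ with $\beta\in K\subseteq\Sigma_v$ lies in the $-\log(q_F)$-eigenspace of $\operatorname{ad}\sigma_v$ by Proposition~\ref{prop:monodromy-eigenspace}. So $N_K$ lies in a single nonzero eigenspace of the semisimple operator $\operatorname{ad}\sigma_v$, and $\operatorname{ad}N_K$ shifts the real eigenvalue grading of $\Lie(H^\circ)$ by $-\log(q_F)$. Hence $\operatorname{ad}N_K$ is nilpotent. Since $N_K\in[\Lie H^\circ,\Lie H^\circ]$, it is a nilpotent element, and the relation $[-\sigma_v,N_K]=2rN_K$ is immediate.

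Next I compute the centralizer in \eqref{eq:pair-centralizer-with-centre}. Lemma~\ref{lem:full-centralizer} gives $Z_H(\sigma_v)=M_u=A\,Z(G^\vee)^{W_F}$, and $Z_H(-\sigma_v)=Z_H(\sigma_v)$ trivially. So $Z_H(\sigma_v,N_K)$ consists of the elements $az$ with $a\in A$, $z\in Z(G^\vee)^{W_F}$ that fix $N_K$. The central factor acts trivially on $\Lie(G^\vee)$, so the condition is only $\Ad(a)N_K=N_K$. Since the $X_\beta$ lie in distinct root spaces and $\Ad(a)X_\beta=\beta(a)X_\beta$, this is equivalent to $\beta(a)=1$ for all $\beta\in K$. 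That gives $Z_A(N_K)=\bigcap_{\beta\in K}\ker\beta$.

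The component group then needs care. I use the linear independence of $\Sigma_v$ from Proposition~\ref{prop:monodromy-eigenspace}. If the $\beta\in K$ extended to part of a $\Z$-basis of $X^*(A)$, the intersection of kernels would be a connected subtorus. In general $\Z K$ need only have finite index in its saturation, so $Z_A(N_K)$ may be disconnected, with component group the torsion of $X^*(A)/\Z K$ dualized. This is why the statement only claims that $\pi_0$ acts trivially, and not that it is trivial.

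For the action on $E$, I apply Lemma~\ref{lem:fixed-locus}: $E(N_K,\mp\sigma_v,\pm r)\cong H_*(\cP_{N_K}^{\sigma_v},\C)$, a finite reduced set of points $wP$ with $w\in W_u$. An element $g=az$ of $Z_H(\sigma_v,N_K)\subset M_u$ acts on $H/P$ by left translation. Since $g\in M_u\subset N_H(A)$ corresponds to the identity in $W_u=N_H(A)/M_u$, it fixes each point $wP$: indeed $g\dot w P=\dot w(\dot w^{-1}g\dot w)P$, and $\dot w^{-1}g\dot w\in M_u\subset P$. On each point-summand the homology is $\C$. The action on the coefficient local system is trivial because the chosen local system has trivial $M_u$-action, including the central $Z(G^\vee)^{W_F}$ (this is the cuspidal quasi-support $(M_u,0,\triv)$). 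So $g$ acts as the identity on every summand. Compatibility of the identification \eqref{eq:fixed-locus-homology} with the component-group action then shows that $\pi_0(Z_H(\sigma_v,N_K))$ acts trivially on $E$.

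Consequently, the only irreducible $\rho$ with $\Hom(\rho,E)\neq0$ is $\triv$, provided $E\neq0$. Nonvanishing holds because $wP$ with $w=1$ lies in $\cP_{N_K}^{\sigma_v}$ exactly when $N_K\in\Lie(U^\circ)$; this fails unless $K$ consists of positive roots. For general $K$ I instead pick $w\in W(H^\circ,A)$ sending $K$ into the positive system. Such a $w$ exists because the roots of $K$ are linearly independent and all take the same value $-\log(q_F)$ on $\sigma_v$, so a generic element near $-\sigma_v$ in the appropriate direction makes them simultaneously negative or positive; then take the $w$ carrying that chamber to the dominant one. This gives a fixed point, and hence $E\neq0$.

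I expect the main obstacle to be the precise compatibility of the component-group action with the fixed-point identification when the action on the local system and central characters is involved. Concretely, I must justify that $Z(G^\vee)^{W_F}$ acts trivially rather than by a character on the homology. I would handle this by quoting the construction in \cite[equations~(3.5)--(3.7) and Proposition~3.5]{Sol21} with the trivial $M_u$-equivariant local system fixed in Section~\ref{sec:hecke}.
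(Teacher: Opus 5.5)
Your proposal is correct and follows essentially the same route as the paper's proof. It uses the centralizer formula from Lemma~\ref{lem:full-centralizer}, triviality of the action on the finite fixed locus because $\dot w^{-1}g\dot w\in M_u\subset P$ together with the trivial fibre action of the local system, and nonvanishing of $E$ from a positive system containing $K$. The perturbation and the appeal to linear independence in your nonvanishing step are unnecessary: $-\sigma_v$ is itself real and regular, so $\{\beta:\beta(-\sigma_v)>0\}$ is already a positive system containing $K$, which is exactly what the paper uses (and your eigenvalue-grading proof of nilpotency is a harmless variant of the paper's nilradical argument).
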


\begin{proof}
All roots in $\Sigma_v$ take the same negative real value on
$\sigma_v$.  They belong to a positive system, so $N_K$ lies in
the nilpotent radical of a Borel subgroup containing $A$.  It is
nilpotent, and the corresponding fixed Springer locus is nonempty.

By \eqref{eq:full-centralizer-corrected}, an element centralizing
$\sigma_v$ is of the form $za$, with $z\in Z(G^\vee)^{W_F}$ and
$a\in A$.
The element $z$ acts trivially on $\Lie(H)$, and $a$ fixes $N_K$
precisely when every root in its support takes value $1$ on $a$.
This proves \eqref{eq:pair-centralizer-with-centre}.

Every fixed coset is represented by $n\in N_H(A)$.  For
$c\in Z(G^\vee)^{W_F} Z_A(N_K)$ we have
$n^{-1}cn\in Z(G^\vee)^{W_F} A=M_u\subset P$, hence $c\cdot nP=nP$.
The coefficient system has trivial equivariant fibre action.
Thus $Z_H(\sigma_v,N_K)$ acts trivially on the finite fixed
locus and its homology.  Lemma~\ref{lem:fixed-locus} gives the
same conclusion for $E(N_K,-\sigma_v,r)$.
This nonzero module has only the trivial component-group isotypic
part, proving uniqueness of the eligible enhancement.  The same
argument applies to $(N_K,\sigma_v,-r)$: changing both signs
leaves the fixed loci and the component-group actions unchanged.
\end{proof}

Equation~\eqref{eq:pair-centralizer-with-centre} also shows that
the Springer condition for $H^\circ$, together with trivial action
of $Z(G^\vee)^{W_F}$, forces the enhancement to be trivial.
This does not imply that the parameter component group is trivial.

Consequently
\[
 \Hom_{\pi_0(Z_H(-\sigma_v,N_K))}
       (\triv,E(N_K,-\sigma_v,r))
 =E(N_K,-\sigma_v,r).
\]

\begin{remark}[Enhancement conventions]
For the component group in \eqref{eq:geometric-triple}, representations
of a quotient are compared with geometric enhancements by inflation
along the quotient map.
A representation of the quotient is trivial if and only if its
inflation is trivial.  More generally, an irreducible representation
of the full component group descends to the quotient precisely when
the kernel acts trivially.

The comparison with the Langlands component group $\mathcal R_\phi$
defined in the introduction
will be made explicit in Section~\ref{sec:parameters}, where we will
deduce the triviality of the Langlands enhancement from the triviality
of the geometric enhancement proved above.
\end{remark}

Write $t_w$ for the standard basis element indexed by $w\in W_H$
in the group algebra $\C[W_H]$ inside the graded Hecke algebra.
We identify $\xi\in\Lie(A)^*$ with its degree-one polynomial generator.
We use the Iwahori--Matsumoto automorphism defined by
\[
 \IM(t_w)=\det(w)t_w\quad(w\in W_H),
 \qquad \IM(\xi)=-\xi\quad(\xi\in\Lie(A)^*).
\]
Here $\det(w)$ is the determinant of the action of $w$ on $\Lie(A)$
for every $w\in W_H=W_u$, including $w\in\Gamma_{u>0}$;
see \cite[equation~(6-16)]{Sol23}.
The involution preserves the parameter $r=\log(q_F)/2$
fixed in \eqref{eq:NK}.  Set
\[
 E_K=\IM^*E(N_K,-\sigma_v,r).
\]

\begin{proposition}[The $\sigma_v$-weight of a standard module]
\label{prop:standard-weight}
For $K\subseteq\Sigma_v$,
\begin{equation}\label{eq:standard-weight-indicator}
 \dim(E_K)_{\sigma_v}=
 \begin{cases}
 1,&K\subseteq\Sigma_v^+,\\
 0,&K\not\subseteq\Sigma_v^+.
 \end{cases}
\end{equation}
\end{proposition}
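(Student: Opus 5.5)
The plan is to compute the $\sigma_v$-weight space of $E_K$ from the fixed-point description in Lemma~\ref{lem:fixed-locus}. We apply that lemma with $H=G_u^\vee$, $M=M_u$, $y=N_K$, and with semisimple element $-\sigma_v$ and parameter $r=\log(q_F)/2$. Its hypotheses have already been checked:
\begin{itemize}[leftmargin=*]
\item $Z_H(\pm\sigma_v)=M_u=AZ(H)$ and $\exp(\sigma_v)$ is regular semisimple, by Lemma~\ref{lem:full-centralizer};
\item the normalization \eqref{eq:borel-normalization} holds, as verified before \eqref{eq:NK};
\item $[-\sigma_v,N_K]=2rN_K$, by \eqref{eq:NK}.
\end{itemize}
The lemma then identifies $E(N_K,-\sigma_v,r)$ with $\bigoplus_{wP}H_*(\{wP\},\C)$. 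The sum runs over $w\in W_u$ with $\Ad(w^{-1})N_K\in\Lie(U^\circ)$, where $U^\circ$ is the unipotent radical of $B_u^{\vee,-}$. Each summand is one-dimensional, in degree zero, and the polynomial algebra acts on it by $p\mapsto p(-w^{-1}\sigma_v)$.

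Next I will track the effect of the Iwahori--Matsumoto twist. In $E_K=\IM^*E(N_K,-\sigma_v,r)$, a degree-one generator $\xi\in\Lie(A)^*$ acts by $\IM(\xi)=-\xi$. On the summand at $wP$ it therefore acts by the scalar $-\xi(-w^{-1}\sigma_v)=\xi(w^{-1}\sigma_v)$. Hence the summand at $wP$ is a weight space of $E_K$ with weight $w^{-1}\sigma_v$. Since the decomposition is a direct sum of characters compatible with the polynomial action, we get
\[
 \dim(E_K)_{\sigma_v}=\#\{w\in W_u:\ w^{-1}\sigma_v=\sigma_v,\ \Ad(w^{-1})N_K\in\Lie(U^\circ)\}.
\]
If the identification in \eqref{eq:fixed-locus-homology} were only filtered, the generalized weight spaces would still have the same dimensions, since these are determined by the composition factors.

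By the last assertion of Lemma~\ref{lem:full-centralizer}, the stabilizer of $\sigma_v$ in $W_u=N_H(A)/M_u$ is trivial. So only $w=1$ can contribute, and it contributes exactly when $N_K\in\Lie(U^\circ)$. Now $\Lie(U^\circ)$ is the sum of the root spaces $\mathfrak g_\beta$ over roots $\beta$ positive for $B_u^{\vee,-}$. The vector $N_K$ has a nonzero component in $\mathfrak g_\beta$ for each $\beta\in K$, and these are distinct root spaces by Proposition~\ref{prop:monodromy-eigenspace}. Hence $N_K\in\Lie(U^\circ)$ if and only if every $\beta\in K$ is positive. By \eqref{eq:positivity-chamber} and \eqref{eq:P-definition}, this means $K\subseteq\Sigma_v^+$, which gives \eqref{eq:standard-weight-indicator}.

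The main obstacle is the sign bookkeeping. We need to confirm that the fixed-point character in \eqref{eq:fixed-point-character} at $wP$ is evaluation at $w^{-1}$ applied to the semisimple element of the triple, here $-\sigma_v$. We also need to confirm that pulling back along $\IM$ negates exactly the polynomial part, so the two signs cancel and we land on $\sigma_v$ rather than $-\sigma_v$. If that cancellation failed, the $\sigma_v$-weight would be absent for every $K$, so this sign is the point I will verify most carefully against \cite[equation~(6-16)]{Sol23} and \cite[Proposition~3.5]{Sol21}.
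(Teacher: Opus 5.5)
Your proposal is correct and follows essentially the same route as the paper. You apply Lemma~\ref{lem:fixed-locus} to $(N_K,-\sigma_v,r)$ and let the $\IM$ twist cancel the sign, so that $-\xi(-w^{-1}\sigma_v)=\xi(w^{-1}\sigma_v)$ and the coset $wP$ carries weight $w^{-1}\sigma_v$; you then use the trivial stabilizer from Lemma~\ref{lem:full-centralizer} to isolate $w=1$, and read the Springer condition $N_K\in\Lie(U^\circ)$ as $K\subseteq\Sigma_v^+$, which is exactly the paper's argument with the sign bookkeeping made explicit.
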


\begin{proof}
Apply Lemma~\ref{lem:fixed-locus} to $(N_K,-\sigma_v,r)$.
After pullback by $\IM$, the fixed coset indexed by $w\in W_H$
has polynomial character $p\mapsto p(w^{-1}\sigma_v)$.
The stabilizer of $\sigma_v$ in $W_H$ is
trivial by Lemma~\ref{lem:full-centralizer}, so only $w=1$
contributes the weight $\sigma_v$.  The coset $P$ satisfies the
Springer condition precisely when $N_K\in\Lie(B_u^{\vee,-})$,
or equivalently $K\subseteq\Sigma_v^+$ by
\eqref{eq:P-definition}.  Its homology summand is one-dimensional.
\end{proof}

\section{Irreducible graded Hecke modules}
\label{sec:constituents}

The monodromy eigenspace in \eqref{eq:full-monodromy-space} has
linearly independent root characters.  Lemma~\ref{lem:eligibility}
determines the enhancement.  It remains to find the
support of the nilpotent element.  We combine the standard-module
weight calculation with the geometric multiplicity formula to
determine that support.

\begin{lemma}
\label{lem:coordinate-orbits}
Let $\Psi$ be a finite linearly independent set of characters of a complex
algebraic torus $T_0$.  For each $\beta\in\Psi$, let $V_\beta$ be
a one-dimensional $T_0$-module of weight $\beta$.
For fixed $0\ne X_\beta\in V_\beta$ and $J\subseteq\Psi$, set
$N_J=\sum_{\beta\in J}X_\beta$ in
$\bigoplus_{\beta\in\Psi}V_\beta$.  Then the $T_0$-orbit of $N_J$
consists of the vectors with support exactly $J$, and
\[
 \overline{T_0N_J}=\bigoplus_{\beta\in J}V_\beta.
\]
For $J,K\subseteq\Psi$, one has
$\overline{T_0N_K}\subseteq\overline{T_0N_J}$ if and only if
$K\subseteq J$, and every orbit closure is a smooth affine
coordinate space.
\end{lemma}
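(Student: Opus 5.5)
The plan is to reduce everything to coordinates on the torus $(\C^\times)^{\Psi}$. Since $\Psi$ is linearly independent in $X^*(T_0)$, the homomorphism
\[
 \beta_\Psi:T_0\longrightarrow(\C^\times)^{\Psi},\qquad t\longmapsto(\beta(t))_{\beta\in\Psi},
\]
is surjective. Its image is a closed connected subtorus. The character lattice of that image is the image of the restriction map, which is injective on $\Z^\Psi$ precisely because the $\beta$ are linearly independent. Hence the image has dimension $|\Psi|$ and must be the whole torus. Note that we do not need saturation of the sublattice spanned by $\Psi$, only its rank; a finite kernel of $\Z^\Psi\to X^*(T_0)$ cannot occur since the map is injective.

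Using the basis $\{X_\beta\}$, identify $\bigoplus_{\beta\in\Psi}V_\beta$ with $\C^\Psi$. An element $t$ then acts by coordinatewise multiplication by $\beta(t)$.

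The orbit of $N_J$, which is the indicator vector of $J$, is
\[
 T_0N_J=\{(c_\beta)_{\beta\in\Psi}:c_\beta\ne0\iff\beta\in J\}.
\]
The inclusion $\subseteq$ is clear. For $\supseteq$, given such a vector $c$, surjectivity lets us choose $t$ with $\beta(t)=c_\beta$ for $\beta\in J$ and arbitrary values elsewhere.

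This orbit is the open dense torus $(\C^\times)^J$ inside the coordinate subspace $\C^J=\bigoplus_{\beta\in J}V_\beta$. Since $\C^J$ is closed and irreducible, $\overline{T_0N_J}=\C^J$, which is a smooth affine coordinate space.

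It remains to compare closures.
\begin{itemize}[leftmargin=*]
 \item If $K\subseteq J$, then $\C^K\subseteq\C^J$, so $\overline{T_0N_K}\subseteq\overline{T_0N_J}$.
 \item Conversely, if $\C^K\subseteq\C^J$, then $N_K$ lies in $\C^J$. Because $N_K$ has nonzero coordinate exactly on $K$, this forces $K\subseteq J$.
\end{itemize}

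The only point needing care is the surjectivity of $\beta_\Psi$, which is the standard duality between closed subtori and saturated-kernel quotients. I would state it via the anti-equivalence between diagonalizable groups and finitely generated abelian groups: a homomorphism of tori is surjective if and only if the induced map on character lattices is injective. Everything else is immediate from the coordinate description.
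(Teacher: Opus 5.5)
Your proof is correct and follows essentially the same route as the paper: you deduce surjectivity of $t\mapsto(\beta(t))_{\beta\in\Psi}$ from injectivity on character lattices, then read off the orbits, their closures as coordinate subspaces, and the containment criterion. Your remark that only the rank of the sublattice spanned by $\Psi$ matters, not its saturation, matches the paper's own comment that primitivity is not required.
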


\begin{proof}
The homomorphism
$T_0\longrightarrow(\C^\times)^\Psi$,
$t\longmapsto(\beta(t))_{\beta\in\Psi}$, is surjective: its pullback
on character lattices sends the standard basis to the linearly
independent characters in $\Psi$, hence is injective.  The nonzero
coordinates of $N_J$ can therefore be scaled independently.
Taking limits at zero gives the displayed closure and the containment
criterion.  Each orbit closure is therefore a coordinate subspace
isomorphic to $\mathbb A_{\C}^{|J|}$, hence smooth.
This argument does not require the character sublattice generated
by $\Psi$ to be primitive.
\end{proof}

For $J\subseteq\Sigma_v$, let $L_J$ be the unique irreducible
quotient of $E_J$.  It exists and is unique by
\cite[Theorem~3.2(a)]{Sol21}, since $r>0$ and pullback by $\IM$
is an exact equivalence.
With the determinant normalization of $\IM$ specified above,
$L_J$ is the irreducible module attached to
$(\sigma_v,N_J,\triv)$, where
$[\sigma_v,N_J]=-\log(q_F)N_J$; see
\cite[Theorem~6.9 and equation~(6-16)]{Sol23}.

The eigenspace \eqref{eq:full-monodromy-space} consists of nilpotent
elements, since it lies in the nilpotent radical used in the proof of
Lemma~\ref{lem:eligibility}.  The group
$Z_H(-\sigma_v)=M_u=A Z(G^\vee)^{W_F}$ acts on this eigenspace with
the same orbits as $A$, because $Z(G^\vee)^{W_F}$ acts trivially by
conjugation.
Lemma~\ref{lem:coordinate-orbits} identifies these orbits with the
subsets $J\subseteq\Sigma_v$.  For each $J$, the trivial
isotypic component of $E(N_J,-\sigma_v,r)$ is nonzero and all other
isotypic components vanish by Lemma~\ref{lem:eligibility}.  Hence, for every
$J\subseteq\Sigma_v$, the cuspidal quasi-support attached to
$(N_J,-\sigma_v,r,\triv)$ is $H$-conjugate to $(M_u,0,\triv)$;
see \cite[Propositions~3.5(a) and~4.1]{Sol21}.
The geometric multiplicity formula in
\cite[Proposition~5.1]{Sol21} therefore applies to every pair
of these orbits, including when $H$ is disconnected.

Write $\mathcal O_J=A N_J$, and let
$j_J:\mathcal O_J\hookrightarrow\overline{\mathcal O_J}$
be the inclusion.  The trivial enhancement corresponds to
the constant local system $\underline{\C}_{\mathcal O_J}$
with its natural $M_u$-equivariant structure; see
\cite[Section~4, equation~(4.1)]{Sol21}.
Define
\[
 \operatorname{IC}_J
 :=(j_J)_{!*}\bigl(\underline{\C}_{\mathcal O_J}[|J|]\bigr),
\]
where $(j_J)_{!*}$ denotes perverse intermediate extension.
We regard this complex as extended by zero to the eigenspace
\eqref{eq:full-monodromy-space}.
By Lemma~\ref{lem:coordinate-orbits}, its support
$\overline{\mathcal O_J}$ is smooth of complex dimension $|J|$.
Hence
\[
 \operatorname{IC}_J
 \cong \underline{\C}_{\overline{\mathcal O_J}}[|J|].
\]

Let $i_K$ denote the inclusion of the point $N_K$ into this
eigenspace.  The functors $i_K^*$ and $i_K^!$ give the stalk
and costalk, respectively.  For $K\subseteq J$, we have
\[
 i_K^*\operatorname{IC}_J\cong\C[|J|],
 \qquad
 i_K^!\operatorname{IC}_J\cong\C[-|J|],
\]
and both complexes vanish when $K\not\subseteq J$.
The group $\pi_0(Z_H(-\sigma_v,N_K))$ acts trivially on
their cohomology: this follows from the natural equivariant
structure for the stalk, and additionally from preservation
of the complex orientation for the costalk.

The geometric multiplicity formula gives
\[
 [E_K:L_J]
 =
 \sum_{m\in\mathbb Z}
 \dim\Hom_{\pi_0(Z_H(-\sigma_v,N_K))}
 \bigl(\triv,H^m(i_K^*\operatorname{IC}_J)\bigr)
 =
 \begin{cases}
  1,& K\subseteq J,\\
  0,& K\not\subseteq J;
 \end{cases}
\]
see \cite[Section~5, equation~(5.2) and
Proposition~5.1(a)]{Sol21}.
Here $[E_K:L_J]$ denotes the Jordan--H\"older multiplicity,
and pullback by $\IM$ preserves these multiplicities.
Thus
\begin{equation}\label{eq:standard-multiplicities}
 [E_K]=\sum_{J\supseteq K}[L_J]
\end{equation}
in the Grothendieck group of finite-dimensional modules over
the graded Hecke algebra.

\begin{proposition}[The $\sigma_v$-weight of an irreducible module]\label{prop:irreducible-weight}
Among the modules $L_J$, exactly $L_{\Sigma_v^+}$ contains
the generalized weight $\sigma_v$, and that weight has multiplicity
one.
\end{proposition}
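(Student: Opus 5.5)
Write $d_J:=\dim(L_J)_{\sigma_v}$ and $e_K:=\dim(E_K)_{\sigma_v}$. The plan is to invert the unitriangular relation \eqref{eq:standard-multiplicities} on this single generalized weight space. Taking generalized $\sigma_v$-weight spaces is exact on finite-dimensional modules over the polynomial subalgebra. Hence $V\mapsto\dim V_{\sigma_v}$ is additive on the Grothendieck group. Applying it to \eqref{eq:standard-multiplicities} gives, for every $K\subseteq\Sigma_v$,
\[
 e_K=\sum_{J\supseteq K} d_J .
\]
Proposition~\ref{prop:standard-weight} supplies the left-hand side: $e_K=1$ if $K\subseteq\Sigma_v^+$, and $e_K=0$ otherwise.

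Next, invert this system over the Boolean lattice of subsets of the finite set $\Sigma_v$ by Möbius inversion:
\[
 d_J=\sum_{K\supseteq J}(-1)^{|K|-|J|}e_K .
\]
If $J\not\subseteq\Sigma_v^+$, then every $K\supseteq J$ also satisfies $K\not\subseteq\Sigma_v^+$, so every term vanishes and $d_J=0$. If $J\subseteq\Sigma_v^+$, the nonzero terms are indexed by the sets $K$ with $J\subseteq K\subseteq\Sigma_v^+$. Their alternating sum is $(1-1)^{|\Sigma_v^+\setminus J|}$, which equals $1$ when $J=\Sigma_v^+$ and $0$ otherwise. Therefore $d_{\Sigma_v^+}=1$ and $d_J=0$ for all $J\ne\Sigma_v^+$, which is the assertion.

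A more elementary alternative avoids inversion and uses nonnegativity. The case $K=\Sigma_v^+$ gives $1=\sum_{J\supseteq\Sigma_v^+}d_J$. For $J\not\subseteq\Sigma_v^+$, the case $K=J$ gives $0=e_J\ge d_J\ge 0$, so $d_J=0$. Every $J\supsetneq\Sigma_v^+$ is of this kind, so the first equation forces $d_{\Sigma_v^+}=1$. The remaining sets are those with $J\subsetneq\Sigma_v^+$. For these, induct downward on $|J|$ using $1=e_J=d_J+\sum_{J'\supsetneq J}d_{J'}$, where the sum already equals $1$ from the term $J'=\Sigma_v^+$; hence $d_J=0$.

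The main point to check is that \eqref{eq:standard-multiplicities} is an identity in the Grothendieck group of all finite-dimensional modules, with the sum running over every $J\subseteq\Sigma_v$. In particular, no composition factor of $E_K$ may lie outside the family $\{L_J\}$. This holds because all composition factors of $E_K$ share the central character $W_{\mathfrak s,u}\sigma_v$. By Proposition~\ref{prop:monodromy-eigenspace} and Lemma~\ref{lem:coordinate-orbits}, the nilpotent orbits compatible with $\sigma_v$ are exactly the $\mathcal O_J$, and by Lemma~\ref{lem:eligibility} the only eligible enhancement on each is $\triv$. Once this is granted, the argument above is purely combinatorial.
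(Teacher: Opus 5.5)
Your proof is correct and is essentially the paper's argument: you take generalized $\sigma_v$-weight-space dimensions in \eqref{eq:standard-multiplicities}, insert the indicator values from Proposition~\ref{prop:standard-weight}, and apply M\"obius inversion on the subset lattice of $\Sigma_v$. Your nonnegativity variant and your check that every composition factor of $E_K$ is some $L_J$ are also sound. The latter is already contained in the geometric multiplicity formula the paper invokes.
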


\begin{proof}
Taking the generalized $\sigma_v$-weight space is exact on
finite-dimensional modules: the polynomial action on a short exact
sequence factors through a common finite-dimensional commutative
Artinian quotient, whose local idempotent at $\sigma_v$ projects
each term onto its generalized weight space.
Put $m_J=\dim(L_J)_{\sigma_v}$.  Taking generalized weight-space
dimensions in
\eqref{eq:standard-multiplicities} and using Proposition~\ref{prop:standard-weight}
gives
\begin{equation}\label{eq:zeta-system}
 \mathbf 1_{K\subseteq\Sigma_v^+}
 =\sum_{J\supseteq K}m_J.
\end{equation}
M\"obius inversion on the subset lattice yields
\begin{equation}\label{eq:mobius}
 m_J=\sum_{J\subseteq K\subseteq\Sigma_v}
       (-1)^{|K|-|J|}\mathbf 1_{K\subseteq\Sigma_v^+}.
\end{equation}
If $J\not\subseteq\Sigma_v^+$, every summand vanishes.
If $J\subseteq\Sigma_v^+$, the nonzero summands are indexed
by the subsets of $\Sigma_v^+\setminus J$, and their sum is
$1$ when this set is empty and $0$ otherwise.
Thus $m_J=1$ for $J=\Sigma_v^+$ and $m_J=0$ otherwise.
\end{proof}

\begin{proposition}[The parameter of $V_{\mathrm{gr}}$]
\label{prop:selected-triple}
The reduced module $V_{\mathrm{gr}}$ is isomorphic to
$L_{\Sigma_v^+}$.  With semisimple entry fixed equal to
$\sigma_v$, its geometric triple is conjugate to
\begin{equation}\label{eq:selected-triple}
 \left(
  \sigma_v,
  \sum_{\substack{a\in\cS(\chi)\\
                   \Gamma\subset\mathcal H_a^+}}
       X_{-v^{-1}\iota(a)},
  \triv
 \right).
\end{equation}
\end{proposition}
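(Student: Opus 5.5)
The plan is to identify $V_{\mathrm{gr}}$ among the $L_J$ using the selected weight, and then read off the triple from the description of $L_J$.

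First, the candidates. By the end of Section~\ref{sec:hecke}, $V_{\mathrm{gr}}$ is irreducible with central character $W_{\mathfrak s,u}\sigma_v$. Its geometric parameter can therefore be conjugated to the form $(\sigma_v,N,\rho)$ with $[\sigma_v,N]=-\log(q_F)N$. By Proposition~\ref{prop:monodromy-eigenspace}, $N$ lies in the eigenspace \eqref{eq:full-monodromy-space}. Remaining conjugations must fix $\sigma_v$, so they lie in $Z_H(\sigma_v)=M_u=AZ(G^\vee)^{W_F}$ by Lemma~\ref{lem:full-centralizer}. Its orbits on the eigenspace are the $A$-orbits. Lemma~\ref{lem:coordinate-orbits} says each of these equals $\mathcal O_J=AN_J$ for a unique $J\subseteq\Sigma_v$. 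So we may take $N=N_J$.

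Next, the enhancement. It must have cuspidal quasi-support $(M_u,0,\triv)$, since this is the datum of $\mathbb H_u$ recorded after \eqref{eq:geometric-triple}. By Lemma~\ref{lem:eligibility}, $\rho=\triv$. Using the identification stated just before Proposition~\ref{prop:irreducible-weight} (via \cite[Theorem~6.9]{Sol23} with the determinant normalization of $\IM$), we conclude $V_{\mathrm{gr}}\cong L_J$ for some $J\subseteq\Sigma_v$.

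Now the weight. By \eqref{eq:selected-sigma-weight}, $V_{\mathrm{gr}}$ has a nonzero generalized $\sigma_v$-weight space. Proposition~\ref{prop:irreducible-weight} says $L_{\Sigma_v^+}$ is the only $L_J$ with this property. Hence $J=\Sigma_v^+$, and the multiplicities agree (both one), which serves as a consistency check.

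Finally, the formula. By \eqref{eq:P-definition} and $\beta_{a,v}=-v^{-1}\iota(a)$, we have
\[
N_{\Sigma_v^+}=\sum_{a\in\cS(\chi),\,\Gamma\subset\mathcal H_a^+}X_{-v^{-1}\iota(a)}.
\]
Lemma~\ref{lem:coordinate-orbits} shows that any other choice of nonzero root vectors with the same support is $A$-conjugate to this one. This gives \eqref{eq:selected-triple}.

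The main obstacle is the bookkeeping of sign conventions in the second step. One has to check that "the module attached to $(\sigma_v,N_J,\triv)$" in the parametrization of $V_{\mathrm{gr}}$ is exactly the $L_J$ defined as the quotient of $\IM^*E(N_J,-\sigma_v,r)$. This means the same $\IM$ normalization and the same sign $[\sigma_v,N]=-\log(q_F)N$ must be used in both places. I would settle it by appealing directly to \cite[Theorem~6.9, equation~(6-16)]{Sol23}, as the text preceding Proposition~\ref{prop:irreducible-weight} already does. I would also note that $L_J\not\cong L_{J'}$ for $J\neq J'$, because their orbits $\mathcal O_J$ differ. This distinctness is what makes the weight test pick out a unique $J$.
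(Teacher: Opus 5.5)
Your proposal is correct and follows essentially the same route as the paper. You place $N$ in the eigenspace, reduce it to some $N_J$ via Lemma~\ref{lem:coordinate-orbits}, force $\rho=\triv$ by Lemma~\ref{lem:eligibility} so that $V_{\mathrm{gr}}\cong L_J$, and then pin down $J=\Sigma_v^+$ using \eqref{eq:selected-sigma-weight} and Proposition~\ref{prop:irreducible-weight}. Your closing remark on the distinctness of the $L_J$ is harmless but not needed, since Proposition~\ref{prop:irreducible-weight} already singles out $J$ from nonvanishing of the $\sigma_v$-weight space alone.
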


\begin{proof}
The nilpotent entry $N$ of the triple of $V_{\mathrm{gr}}$ lies in
the full eigenspace \eqref{eq:full-monodromy-space}.
By Lemma~\ref{lem:coordinate-orbits}, it is $A$-conjugate to
$N_J$ for a unique subset $J\subseteq\Sigma_v$.  This conjugation fixes
$\sigma_v$.  Lemma~\ref{lem:eligibility} forces the enhancement to
be trivial, so $V_{\mathrm{gr}}\cong L_J$.

The selected weight of $V_{\mathrm{gr}}$ has multiplicity one by
\eqref{eq:selected-sigma-weight}.  Proposition~\ref{prop:irreducible-weight}
therefore forces $J=\Sigma_v^+$.
Finally $\Sigma_v^+$ is described by
\eqref{eq:P-definition}, and $\beta_{a,v}=-v^{-1}\iota(a)$,
giving \eqref{eq:selected-triple}.
\end{proof}

\section{Proof of the main theorem and consequences}
\label{sec:parameters}

We now complete the proof of Theorem~\ref{thm:main}.  Recall that,
for every $\Gamma\in\Ch(\chi)$, the asserted enhanced parameter of
$\pi_\Gamma$ has a representative satisfying
\[
 \widetilde\phi_\Gamma=\widehat\chi,
 \qquad
 N_\Gamma\sim
 \sum_{\substack{a\in\cS(\chi)\\
                  \Gamma\subset\mathcal H_a^+}}
       X_{-\iota(a)},
 \qquad
 \rho_\Gamma=\triv.
\]
Here conjugacy is taken under
$Z_{G^\vee}\bigl(\widehat\chi(W_F)\bigr)^\circ$, and each displayed
root vector is nonzero.  We first explain how the geometric parameters
obtained in the preceding sections translate into enhanced Langlands
parameters, including the comparison of enhancements.

Under Solleveld's Whittaker-normalized correspondence, the enhanced
parameter $(\phi,\rho)$ associated with a geometric triple with
semisimple entry $\sigma_v$ has cuspidal support represented by the
torus parameter $t_v=u\exp(\sigma_v)$; see
\cite[proof of Lemma~6.10]{Sol23}.
For principal-series parameters, the torus cuspidal-support parameter and
$\widetilde\phi$ agree up to conjugacy; see
\cite[Section~4, equation~(4-1)]{Sol23}.  With the representatives fixed
here, that parameter is $t_v=\widehat{\chi^v}$.
The parameter translation identifies the nilpotent element with
$\log\phi\left(1,\begin{psmallmatrix}1&1\\0&1\end{psmallmatrix}\right)$.
To compare enhancements, put
\[
 C_\phi=\pi_0 Z_{G^\vee}\bigl(\phi(W_F\times\{1\}),N_\phi\bigr).
\]
The inclusion of the centralizer of $\phi(\mathrm{SL}_2(\C))$
into the centralizer of $N_\phi$ induces an isomorphism on
component groups.  Consequently there is a natural surjection
$C_\phi\twoheadrightarrow\mathcal R_\phi$, and an LLC enhancement
is compared with a geometric enhancement by inflation along this
surjection; see \cite[Section~4]{Sol23}.
To make the comparison with $\pi_0(Z_H(\sigma_v,N_\phi))$ explicit, put
\[
 h_\phi=d(\phi|_{\mathrm{SL}_2})
       \begin{pmatrix}1&0\\0&-1\end{pmatrix},
 \qquad \sigma_0=\sigma_v+\frac{\log(q_F)}2h_\phi.
\]
For the compatible representative supplied by the parameter construction,
$h_\phi\in\Lie(H)$ and $[\sigma_v,h_\phi]=0$.  Moreover,
\[
 [\sigma_v,N_\phi]=-\log(q_F)N_\phi,
 \qquad [h_\phi,N_\phi]=2N_\phi.
\]
Thus $\sigma_0$ is semisimple and $[\sigma_0,N_\phi]=0$.
The logarithmic parameter construction and the change from
$\sigma_0$ to $\sigma_v$ give, respectively, the isomorphisms
\[
 C_\phi\cong\pi_0\bigl(Z_H(\sigma_0,N_\phi)\bigr)
 \cong \pi_0\bigl(Z_H(\sigma_v,N_\phi)\bigr);
\]
see \cite[proof of Theorem~3.8(a)]{AMS17} and
\cite[equations~(3.6)--(3.7)]{Sol21}.
By Lemma~\ref{lem:eligibility}, the corresponding geometric enhancement
is trivial.  Since $C_\phi\twoheadrightarrow\mathcal R_\phi$ is surjective,
the LLC enhancement itself is trivial.
These comparisons are part of the parameter correspondence of
\cite[Theorem~6.9, Lemma~6.10, and Theorem~7.1]{Sol23}.

\begin{proof}[Proof of Theorem~\ref{thm:main}]
Fix $\Gamma\in\Ch(\chi)$ and choose $v$ by
\eqref{eq:selected-v}.  Sections~\ref{sec:hecke}--\ref{sec:constituents} transport
$\pi_\Gamma$ through the Morita equivalence of Section~\ref{sec:hecke},
the comparison \eqref{eq:affine-comparison}, and the inverse reductions
\eqref{eq:reductions}.
Proposition~\ref{prop:selected-triple} identifies the resulting geometric
triple.  With infinitesimal representative $t_v$, the enhanced parameter therefore
has
\begin{equation}\label{eq:v-chart}
 \widetilde\phi_\Gamma=\widehat{\chi^v}=t_v,
 \qquad
 N_\Gamma\sim
 \sum_{\substack{a\in\cS(\chi)\\
                  \Gamma\subset\mathcal H_a^+}}
 X_{-v^{-1}\iota(a)},
 \qquad
 \rho_\Gamma=\triv.
\end{equation}

By Weyl equivariance of the torus LLC, conjugating the enhanced
parameter by a representative of $v$ sends $\widehat{\chi^v}$ to
$\widehat\chi$, sends each monodromy root $-v^{-1}\iota(a)$ to
$-\iota(a)$, and preserves the trivial enhancement.  This gives
\eqref{eq:main-formula}.

Finally, Lemma~\ref{lem:coordinate-orbits}, applied to the independent
characters $-\iota(a)$ of the centralizer torus, shows that all choices
of nonzero vectors in the displayed root spaces are conjugate.
\end{proof}

\begin{proof}[Proof of Corollary~\ref{cor:socle}]
Proposition~\ref{prop:socle-chamber} gives
$\soc I(\chi)=\pi_{\Gamma_{\mathrm{soc}}}$.  For a fixed
$a\in\cS(\chi)$, the connected chamber $\Gamma_{\mathrm{soc}}$ is disjoint
from $\mathcal H_a$, so $x\mapsto\langle x,a\rangle$ has constant sign on it.
Since $C^+\subset\Gamma_{\mathrm{soc}}$,
\[
 \Gamma_{\mathrm{soc}}\subset\mathcal H_a^+
 \quad\Longleftrightarrow\quad
 C^+\subset\mathcal H_a^+.
\]
Apply Theorem~\ref{thm:main} to $\Gamma_{\mathrm{soc}}$ and substitute this
equivalence into its support formula.  The result is
\eqref{eq:socle-formula}, including the conjugacy and enhancement assertions.
\end{proof}

\begin{corollary}[Vanishing of socle monodromy]
\label{cor:zero-monodromy}
The socle has zero monodromy if and only if
\begin{equation}\label{eq:zero-monodromy-criterion}
 \langle x,a\rangle<0
 \qquad\text{for every }x\in C^+\text{ and }a\in\cS(\chi).
\end{equation}
In this case its parameter is $\phi_{\mathrm{soc}}(w,g)=\widehat\chi(w)$,
with trivial enhancement.
\end{corollary}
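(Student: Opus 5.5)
The plan is to read off the criterion directly from Corollary~\ref{cor:socle}. By \eqref{eq:socle-formula}, $N_{\mathrm{soc}}$ is conjugate to the sum of nonzero root vectors $X_{-\iota(a)}$ over those $a\in\cS(\chi)$ with $C^+\subset\mathcal H_a^+$. The roots $-\iota(a)$ are pairwise distinct and linearly independent by Proposition~\ref{prop:monodromy-eigenspace}, transported by $v$. Hence the root vectors lie in distinct root spaces, and such a sum vanishes if and only if its index set is empty. Conjugacy preserves being zero. So the socle has zero monodromy exactly when no $a\in\cS(\chi)$ satisfies $C^+\subset\mathcal H_a^+$.

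Next I translate this into \eqref{eq:zero-monodromy-criterion}. Each $\mathcal H_a$ is a relative root hyperplane, so it is disjoint from the connected open cone $C^+$. Hence $x\mapsto\langle x,a\rangle$ has constant nonzero sign on $C^+$, as in the proof of Corollary~\ref{cor:socle}. Consequently $C^+\not\subset\mathcal H_a^+$ is equivalent to $\langle x,a\rangle<0$ for all $x\in C^+$. Quantifying over $a\in\cS(\chi)$ gives the stated equivalence.

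For the final assertion, I use that when $N_{\mathrm{soc}}=0$ the $L$-parameter $\phi_{\mathrm{soc}}$ is trivial on $\mathrm{SL}_2(\C)$. This follows because $d\phi$ on $\mathfrak{sl}_2$ is determined by the Jacobson--Morozov triple through $N$, so $N=0$ forces the $\mathrm{SL}_2$-restriction to be trivial. Then \eqref{eq:infinitesimal-convention} gives $\phi_{\mathrm{soc}}(w,g)=\widetilde\phi_{\mathrm{soc}}(w)=\widehat\chi(w)$. The enhancement is trivial by \eqref{eq:socle-formula}.

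The only point needing care is that the sum vanishes only when its support is empty. This is immediate from the distinctness of the root lines, so I expect no serious obstacle.
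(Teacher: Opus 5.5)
Your proposal is correct and follows the paper's proof essentially step for step. You use three facts in the same order as the paper: distinct root spaces force the sum to vanish only when its support is empty; each reducibility coroot has constant nonzero sign on $C^+$, which turns this into the stated criterion; and $N_{\mathrm{soc}}=0$ makes the $\mathrm{SL}_2(\C)$ part trivial, giving $\phi_{\mathrm{soc}}(w,g)=\widetilde\phi_{\mathrm{soc}}(w)=\widehat\chi(w)$. Your Jacobson--Morozov justification for the last step is loose; the cleaner argument is that $d\phi$ restricted to the simple Lie algebra $\mathfrak{sl}_2$ kills the raising operator, hence is zero, and $\mathrm{SL}_2(\C)$ is connected.
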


\begin{proof}
The root spaces in \eqref{eq:socle-formula} are distinct, so the sum
vanishes exactly when its indexing set is empty.  Each reducibility
coroot has constant nonzero sign on $C^+$, which gives
\eqref{eq:zero-monodromy-criterion}.  If $N_{\mathrm{soc}}=0$, the
algebraic homomorphism on $\mathrm{SL}_2$ is trivial.  The last assertion
then follows from $\widetilde\phi_{\mathrm{soc}}=\widehat\chi$ and
$\rho_{\mathrm{soc}}=\triv$.
\end{proof}

We now compare \eqref{eq:zero-monodromy-criterion} with the hypotheses
of \cite[Definitions~6.6.4--6.6.5]{BC09}.

\begin{proposition}[The conditions of Bella\"iche--Chenevier]
\label{prop:bc-comparison}
Let $F'/F$ be a finite Galois extension that splits $G$, and put
\[
 \chi'=\chi\circ\mathrm{Nm}_{T(F')/T(F)}.
\]
Here $\mathrm{Nm}_{T(F')/T(F)}(t)=
\prod_{\gamma\in\operatorname{Gal}(F'/F)}\gamma(t)$ for $t\in T(F')$.
Positivity for the absolute coroots is taken with respect to $B_{F'}$.
Suppose that $\chi'$ is regular for the absolute Weyl group and that
\begin{equation}\label{eq:bc-condition}
 \chi'\circ\alpha^\vee\ne|\,\cdot\,|_{F'}
 \qquad\text{for every positive absolute coroot }\alpha^\vee
 \text{ of }G_{F'}.
\end{equation}
Then $\chi$ is regular and $N_{\mathrm{soc}}=0$.
\end{proposition}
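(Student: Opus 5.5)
Two reductions are needed: first that regularity of $\chi'$ forces regularity of $\chi$, and then, via Corollary~\ref{cor:zero-monodromy}, that \eqref{eq:bc-condition} rules out every reducibility coroot that is positive on $C^+$.

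\textbf{Regularity.} Suppose $w\in W$ fixes $\chi$. Choose a representative of $w$ in $N_G(S)(F)$. It also normalizes $T=Z_G(S)$, and conjugation by it commutes with the Galois action. Hence it commutes with $\mathrm{Nm}_{T(F')/T(F)}$, so its image in the absolute Weyl group fixes $\chi'=\chi\circ\mathrm{Nm}$. Regularity of $\chi'$ makes this image trivial. Since $W\to W(G_{F'},T_{F'})$ is injective, $w=1$.

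\textbf{Reducing to absolute reducibility.} By Corollary~\ref{cor:zero-monodromy}, it suffices to show: no $a\in\cS(\chi)$ satisfies $\langle x,a\rangle>0$ on $C^+$. Suppose instead that $a\in\cS(\chi)$ is a positive relative coroot, i.e.\ lies on the $B$-positive side. Then the co-rank-one induction $\Ind_{B\cap M_\alpha}^{M_\alpha}(\chi)$ is reducible with positive exponent $s_a>0$ on the positive coroot. We want a positive absolute coroot $\alpha^\vee$ with $\chi'\circ\alpha^\vee=|\cdot|_{F'}$, contradicting \eqref{eq:bc-condition}.

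\textbf{The rank-one comparison.} This runs through the quasi-split rank-one groups $M_\alpha$, whose derived groups are, up to isogeny, $\mathrm{Res}_{E/F}\mathrm{SL}_2$ or $\mathrm{Res}_{E/F}\mathrm{SU}_3$. I would handle it via the rank-one pole description \eqref{eq:rank-one-poles} and \eqref{eq:rank-one-exponents}, combined with the explicit classical reducibility points in each type.

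In the $\mathrm{Res}\,\mathrm{SL}_2$ case, reducibility means $\chi\circ H_a=|\cdot|_E^{\pm1}$ in the appropriate normalization. Pulling back along the norm gives $\chi'\circ\alpha^\vee=|\cdot|_{F'}$ for any absolute coroot $\alpha^\vee$ restricting to $a$, since $|\mathrm{Nm}_{F'/E}(x)|_E=|x|_{F'}$. Take $\alpha^\vee$ positive. The sign $s_a>0$ picks exponent $+1$ rather than $-1$, and the orientation matching the convention in \eqref{eq:bc-condition} follows from \eqref{eq:torus-reciprocity}.

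In the $\mathrm{SU}_3$ case, the reducibility points are $|\cdot|_E$ on the long coroot, possibly twisted by a character trivial on norms, and $|\cdot|_E^{1/2}$ times a character of $E^\times$ that is nontrivial on $F^\times$ but trivial on $N_{E/F}E^\times$. Pulling back by the norm to $F'$ kills the twisting character. The exponent-$1$ points then give $\chi'\circ\alpha^\vee=|\cdot|_{F'}$ for the long absolute coroot $\alpha^\vee=\beta_1^\vee+\beta_2^\vee$. The exponent-$\tfrac12$ points give $|\cdot|_{F'}$ on one of the simple coroots, since the sum of the two simple coroots composed with the norm doubles the exponent.

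\textbf{Main obstacle.} The hard step is this case-by-case matching in the unitary case, in particular checking that the quadratic twist really becomes trivial after the norm and that the exponent lands on $|\cdot|_{F'}$ rather than its inverse. I would first try to bypass the cases uniformly, using Lemma~\ref{lem:root-parameter-scale}. It identifies the reducibility with a root $\delta$ of $G_u^{\vee,\circ}$ where $\widehat\chi$ has eigenvalue $q_F^{-1}$ on $\mathfrak g_{-\delta}$. Restricting the parameter to $W_{F'}$ would then exhibit an absolute root of $G^\vee$ with Frobenius eigenvalue $q_{F'}^{\mp1}$, which is \eqref{eq:bc-condition} read through \eqref{eq:torus-reciprocity}. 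The risk in this route is the positivity bookkeeping between $B^\vee$ and $B^{\vee,-}$.
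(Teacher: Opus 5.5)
The regularity step and the reduction through Corollary~\ref{cor:zero-monodromy} are fine, and so is the $\mathrm{Res}\,\mathrm{SL}_2$ case. The gap is in the unitary matching, which you correctly flag as the crux: as written, it is reversed.

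Test it on the $U_3(E/F)$ example of Section~\ref{sec:unitary-example} with $F'=E$. There $\mathrm{Nm}(t_1,t_2,t_3)=(t_1\bar t_3^{-1},t_2\bar t_2^{-1},t_3\bar t_1^{-1})$, so
\[
 \chi_+'=(|\cdot|_E,1,|\cdot|_E^{-1}),\qquad
 \chi_-'=(\epsilon_E|\cdot|_E^{1/2},1,\epsilon_E|\cdot|_E^{-1/2}).
\]
At the exponent-$1$ point one has $\chi_+'\circ(\beta_1^\vee+\beta_2^\vee)=|\cdot|_E^{2}$; it is the simple coroots that give $|\cdot|_E$. At the exponent-$\tfrac12$ point the simple coroots give $\epsilon_E|\cdot|_E^{1/2}$, and only the long coroot gives $|\cdot|_E$. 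This matches the monodromies $E_{12}+E_{23}$ and $E_{13}$ in the table there.

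In general, for $\mathrm{Res}_{E/F}\mathrm{SU}_3(E'/E)$, put $\theta=\chi\circ\alpha^\vee_{E'}$, the character on the $E'^\times$-torus. Then $\chi'\circ\beta_1^\vee=\theta\circ\mathrm{Nm}_{F'/E'}$, while $\chi'\circ(\beta_1^\vee+\beta_2^\vee)=\theta\circ\mathrm{Nm}_{F'/E}$ factors through $N_{E'/E}$. So the doubling you invoke is what lets the long coroot, not a simple one, reach exponent $1$ at $s=\tfrac12$. That is also where a twist trivial on norms dies. At exponent $1$ there is no twist to kill: reducibility there forces $\theta=|\cdot|_{E'}$ exactly, and a nontrivial twist would in general survive on the simple coroots.

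With the assignment swapped, your route does prove the proposition. After Corollary~\ref{cor:zero-monodromy} it is purely representation-theoretic. In particular \eqref{eq:torus-reciprocity} plays no role, since the orientation comes directly from $s_a>0$.

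The paper's proof is essentially the uniform argument you sketch as a fallback, and it avoids this error.
\begin{itemize}
\item It takes the socle representative with $\widetilde\phi_{\mathrm{soc}}=\widehat\chi$. Since $\iota$ carries $B$-positive coroots to $B^{\vee,-}$-positive roots, each $X_{-\iota(a)}$ with $a$ positive lies in $\Lie(U^\vee)$. This settles your positivity bookkeeping.
\item It picks a nonzero component of $N_{\mathrm{soc}}$ in an absolute root space $\Lie(G^\vee)_\gamma$, not a root space of $G_u^{\vee,\circ}$. For instance, $E_{12}+E_{23}$ is one root vector of $H_+^\circ$ but has two absolute components.
\item Writing $\widehat\chi(w)=(t_\chi(w),w)$ and restricting $\Ad(\widehat\chi(w))N=\|w\|N$ to $W_{F'}$, where the pinned action is trivial, gives $\gamma(t_\chi(w))=\|w\|_{F'}$.
\item Norm compatibility of the torus LLC turns this into $\chi'\circ a_\gamma=|\cdot|_{F'}$, contradicting \eqref{eq:bc-condition}.
\end{itemize}
No rank-one classification is needed. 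The violating absolute coroot is read off from $N$ rather than predicted case by case, which is exactly where your analysis went astray.
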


\begin{proof}
The relative Weyl group embeds in the absolute Weyl group, and its action
commutes with the norm map.  Thus any relative Weyl element fixing $\chi$
also fixes $\chi'$, and must be the identity.  In particular, the main
theorem applies to $\chi$.

Choose the representative in \eqref{eq:socle-formula}, with
$\widetilde\phi_{\mathrm{soc}}=\widehat\chi$.  Every coroot occurring
in the sum is positive relative to $B$.  Since
$B^{\vee,-}$ is opposite to $B^\vee$, the root
convention in the theorem places the corresponding monodromy vectors in
$\Lie(U^\vee)$.

Suppose that $N_{\mathrm{soc}}\ne0$, and choose a nonzero component in
an absolute positive root space $\Lie(G^\vee)_\gamma$.
Write $\widehat\chi(w)=(t_\chi(w),w)$.  On $W_{F'}$ the pinned action
is trivial, so $t_\chi|_{W_{F'}}$ is a torus homomorphism.  Comparing
$\gamma$-components in the Weil--Deligne relation gives
\[
 \gamma(t_\chi(w))=\|w\|_{F'}\qquad(w\in W_{F'}).
\]
The torus local Langlands correspondence identifies restriction to
$W_{F'}$ with pullback by the norm.  If $a_\gamma$ is the positive
absolute coroot of $G_{F'}$ corresponding to the dual root $\gamma$,
the displayed equality therefore says
\[
 \chi'\circ a_\gamma=|\,\cdot\,|_{F'}.
\]
Here $\|\,\cdot\,\|_F|_{W_{F'}}=\|\,\cdot\,\|_{F'}$, so no
rescaling of the norm is involved.  This contradicts
\eqref{eq:bc-condition}, and proves the proposition.
\end{proof}

The argument uses norm compatibility for torus parameters.  The
compatibility between base change and the socles over $F$ and $F'$
discussed in \cite[Remark~6.6.6(3)]{BC09} is a separate statement.

\begin{corollary} [Distinctness of the parameters]
\label{cor:packet-separation}
The parameters $\phi_\Gamma$, $\Gamma\in\Ch(\chi)$, are pairwise
nonconjugate under $G^\vee$.  Consequently, a single unenhanced $L$-parameter labels
at most one constituent of the fixed regular principal series $I(\chi)$.
\end{corollary}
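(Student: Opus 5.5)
By Theorem~\ref{thm:main}, every $\phi_\Gamma$ can be represented with the same infinitesimal parameter $\widetilde\phi_\Gamma=\widehat\chi$. The monodromy is then
\[
N_\Gamma=\sum_{a\in\cS(\chi),\,\Gamma\subset\mathcal H_a^+}X_{-\iota(a)}.
\]
I will show that if $\phi_\Gamma$ and $\phi_{\Gamma'}$ are $G^\vee$-conjugate, then $\Gamma=\Gamma'$.

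\textbf{Step 1: reduce to conjugacy by the centralizer of $\widehat\chi$.} Let $g\in G^\vee$ conjugate $\phi_\Gamma$ to $\phi_{\Gamma'}$. Then $g$ conjugates $\widetilde\phi_\Gamma$ to $\widetilde\phi_{\Gamma'}$, and both equal $\widehat\chi$. Hence $g$ lies in $Z_{G^\vee}(\widehat\chi(W_F))$, which is contained in $H_\chi$. This is the full centralizer, not just its identity component. The key input is Lemma~\ref{lem:full-centralizer}, applied at $v=1$ with $u=\widehat{\chi_u}$ and $\sigma_1$ the positive logarithmic part. It says that the centralizer of the regular semisimple element $\sigma_1$ in $H$ is $M_u=(T^\vee)^{W_F}=A\,Z(G^\vee)^{W_F}$. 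The element $g$ centralizes $\widehat\chi(W_F)$, so it centralizes the unitary part $u(W_F)$ and the positive part $\exp(\sigma_1)$ separately. The polar decomposition is canonical, so I expect $g$ to centralize $\sigma_1$ as well. Thus $g\in A\,Z(G^\vee)^{W_F}$.

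\textbf{Step 2: conclude from the orbit structure.} The group $Z(G^\vee)^{W_F}$ acts trivially on $\Lie(G^\vee)$. Therefore $N_\Gamma$ and $N_{\Gamma'}$ are conjugate under $A$. Both lie in the eigenspace \eqref{eq:full-monodromy-space}, taken at $v=1$. Its roots $-\iota(a)$ are linearly independent by Proposition~\ref{prop:monodromy-eigenspace}. By Lemma~\ref{lem:coordinate-orbits}, an $A$-orbit in this eigenspace determines the support exactly, so
\[
\{a:\Gamma\subset\mathcal H_a^+\}=\{a:\Gamma'\subset\mathcal H_a^+\}.
\]
A chamber of the arrangement $\Ch(\chi)$ is determined by its sign vector on $\cS(\chi)$. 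Indeed, each chamber is the intersection of one open half-space for each hyperplane, and such an intersection is convex, hence connected. Therefore $\Gamma=\Gamma'$.

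\textbf{Step 3: the consequence.} The constituents $\pi_\Gamma$ exhaust $I(\chi)$ by Theorem~\ref{thm:chambers}. Since distinct chambers give nonconjugate $\phi_\Gamma$, an unenhanced $L$-parameter can label at most one constituent.

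\textbf{Main obstacle.} The delicate point is Step 1: showing that conjugating $\widehat\chi$ to itself forces $g$ into $M_u$. This requires the full, possibly disconnected, centralizer, not just its identity component. I would argue that the positive part of $\widehat\chi(\mathrm{Fr}_F)$ is a polynomial in, or a canonical function of, the whole element, so it is preserved by anything commuting with $\widehat\chi$. Then the regularity statement in Lemma~\ref{lem:full-centralizer} applies. If that route is awkward, an alternative is to work in the $v=1$ chart of the proof of Theorem~\ref{thm:main} and use that the stabilizer of $\sigma_1$ in $W_u$ is trivial.
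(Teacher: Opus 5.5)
Your proof is correct, but it takes a different route from the paper's.

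\textbf{The paper's route.} The paper argues through the correspondence. A $G^\vee$-conjugacy between $\phi_\Gamma$ and $\phi_{\Gamma'}$ matches the two trivial enhancements, so Theorem~\ref{thm:main} gives both constituents the same enhanced parameter. Injectivity of the principal-series LLC \cite[Theorem~7.1]{Sol23}, together with Theorem~\ref{thm:chambers}, then gives $\Gamma=\Gamma'$.

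\textbf{Your route.} You stay on the dual side. The full centralizer of $\widehat\chi(W_F)$ is $(T^\vee)^{W_F}=A\,Z(G^\vee)^{W_F}$ by Lemma~\ref{lem:full-centralizer} at $v=1$. Using the lemma at $v=1$ is legitimate, because its proof uses only the regularity of $\chi^v$, not the chamber condition on $v$. This group acts on each root space $\mathfrak g_{-\iota(a)}$ by a nonzero scalar, so it preserves monodromy supports. Finally, a chamber is determined by its sign vector.

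\textbf{Comparison.} The paper's argument is two lines, but it rests on injectivity of the LLC and on $\rho_\Gamma=\triv$. Yours uses neither. It shows directly that the explicit representatives in \eqref{eq:main-formula} are already separated by their monodromy support. The price is the centralizer computation.

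\textbf{Closing the step you flag.} Pick $m\geq1$ such that $\mathrm{Fr}_F^m$ acts trivially on $G^\vee$. Write $u(\mathrm{Fr}_F)^m=t'\rtimes\mathrm{Fr}_F^m$; all character values of $t'$ have modulus one because $u$ is bounded.
\begin{itemize}
\item Since $\exp(\sigma_1)\in A$ commutes with $u(\mathrm{Fr}_F)$, the element $g$ commutes with $\widehat\chi(\mathrm{Fr}_F)^m$. That is, $g$ commutes with $\exp(m\sigma_1)t'\in T^\vee$.
\item In a faithful representation, the eigenspaces of $\exp(m\sigma_1)$ are unions of eigenspaces of $\exp(m\sigma_1)t'$, grouped by modulus. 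So $g$ preserves them.
\item These eigenspaces coincide with those of $\sigma_1$, because every weight takes a real value on $\sigma_1$. Hence $\Ad(g)\sigma_1=\sigma_1$.
\item Consequently $g$ commutes with $u(\mathrm{Fr}_F)=\exp(-\sigma_1)\widehat\chi(\mathrm{Fr}_F)$. It also commutes with $u(I_F)=\widehat\chi(I_F)$.
\end{itemize}
So $g$ lies in $Z_H(\sigma_1)$, as you need.
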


\begin{proof}
If $\phi_\Gamma$ and $\phi_{\Gamma'}$ are conjugate, the conjugation
identifies their trivial enhancements.  Theorem~\ref{thm:main} therefore
gives the same enhanced parameter for both constituents.  Injectivity of
the principal-series correspondence of \cite[Theorem~7.1]{Sol23} gives
$\pi_\Gamma\cong\pi_{\Gamma'}$.  The chamber parametrization in
Theorem~\ref{thm:chambers} then gives $\Gamma=\Gamma'$.
\end{proof}

The corollary concerns constituents of the fixed regular principal
series $I(\chi)$.  It makes no assertion about other members of the
corresponding $L$-packets.

\begin{remark}
The rank-one and monodromy signs are fixed in Section~\ref{sec:rank-one}.
The Iwahori--Matsumoto twist is included in $E_K$ and contributes
no further sign to \eqref{eq:main-formula}.
\end{remark}

\subsection{The case of \texorpdfstring{$\mathrm{GL}_2$}{GL2}}
In the matrix examples below, $e_i$ denotes the $i$th standard basis
vector and $E_{ij}$ the matrix unit with its only nonzero entry, equal
to $1$, in position $(i,j)$.
Let $G=\mathrm{GL}_2$, let $B$ and $B^\vee$ be upper
triangular, so that $B^{\vee,-}$ is lower triangular, and write
$\nu=|\,\cdot\,|$.  Let $\alpha$ be the
usual positive root of $G^\vee$ and take
$\chi_+=(\nu^{1/2},\nu^{-1/2})$.  The oriented coroot is the
positive coroot $a$, and
\[
 h_a=\operatorname{diag}(\varpi_F^{-1},\varpi_F),\qquad
 d_a=1,\qquad \lambda_a=\lambda_a^*=1,\qquad
 \iota(a)=-\alpha.
\]
The chamber $C^+$ is on the positive side of $a$, so the socle
formula gives $N=E_{12}$.  With the chosen reciprocity,
\[
 \widehat\chi_+(\mathrm{Fr}_F)
       =\operatorname{diag}(q_F^{-1/2},q_F^{1/2}),\qquad
 \Ad\bigl(\widehat\chi_+(\mathrm{Fr}_F)\bigr)E_{12}
       =q_F^{-1}E_{12}.
\]
This is the Steinberg socle of $I(\chi_+)$; its other constituent
is the trivial quotient, with $N=0$.

For this socle, the choice of weight in the opposite Jacquet module
gives $v=w_0$.
There $\sigma_v=\operatorname{diag}(\log(q_F)/2,-\log(q_F)/2)$,
$\beta_{a,v}=-\alpha$, and the positive root for the lower
triangular $B_u^{\vee,-}$ is represented by $E_{21}$.
Conjugating the whole pair by $w_0$ returns the preceding
representative with monodromy $E_{12}$.
For the reversed character
$\chi_-=(\nu^{-1/2},\nu^{1/2})$, the oriented coroot is $-a$.
Its socle chamber $C^+$ is on the negative side, and the formula
gives the trivial socle with $N=0$.  The Steinberg quotient is
represented by
$(\operatorname{diag}(q_F^{1/2},q_F^{-1/2}),E_{21})$, which is
conjugate to the Steinberg pair above.  Reversing the inducing
character therefore exchanges the subrepresentation and quotient
while preserving their Langlands parameters.
More generally, a common unramified twist $\eta$ multiplies both
Frobenius matrices by $\eta(\varpi_F)$ and leaves the displayed
monodromy matrices unchanged.  Under conjugation back to the original representative, this scalar
remains $\eta(\varpi_F)$, as prescribed by
\eqref{eq:torus-reciprocity}.

\section{An example for unramified \texorpdfstring{$U_3$}{U3}}
\label{sec:unitary-example}

We compute the constituent parameters at both nonunitary reducibility
points of the unramified principal series of $U_3$.  The two points
lead to different centralizers, and one gives a nontrivial
Langlands component group.  We use the representative with infinitesimal
parameter $\widehat\chi$ and the reciprocity convention of
\eqref{eq:torus-reciprocity}.

Let $E/F$ be the unramified quadratic extension, write $q=q_F$, and use
the common uniformizer $\varpi=\varpi_F=\varpi_E$.  Then
$|\varpi|_E=q^{-2}$.  Write $a\mapsto\bar a$ for the nontrivial
automorphism of $E/F$.  Let $G=U_3(E/F)$ be quasi-split, with upper
triangular $B$, and write
\[
 T(F)=\{\operatorname{diag}(a,b,\bar a^{-1}):
                a,b\in E^\times,\ b\bar b=1\}.
\]
Let $\epsilon_E$ be the unramified quadratic character of $E^\times$,
with $\epsilon_E(\varpi)=-1$, and put
\[
 \xi_+=|\,\cdot\,|_E,
 \qquad \xi_- =\epsilon_E|\,\cdot\,|_E^{1/2}.
\]
Define $\chi_\pm(\operatorname{diag}(a,b,\bar a^{-1}))=\xi_\pm(a)$;
in particular, these characters are trivial on the middle factor.
Both are regular: the nonidentity relative Weyl element sends an
unramified character $\xi$ to $\xi^{-1}$, and
$\xi_\pm(\varpi)\ne\xi_\pm(\varpi)^{-1}$.

\subsection{The two reducibility points}

Set
\[
 h=\operatorname{diag}(\varpi^{-1},1,\varpi),
 \qquad z(\chi)=\chi(h).
\]
This is the primitive coordinate on the unramified character torus.
The Iwahori-spherical labels in this coordinate are
\[
 (\lambda,\lambda^*)=(3,1).
\]
These labels are given in \cite[Section~1, after (1-15)]{Sol23}.
Consequently the two nonunitary
reducibility values with $|z|>1$ are
\[
 z(\chi_+)=q^2,\qquad z(\chi_-)=-q.
\]
The maximal split torus has
$S(F)=\{\operatorname{diag}(t,1,t^{-1}):t\in F^\times\}$.
The indivisible positive relative root is determined by
\[
 \alpha(\operatorname{diag}(t,1,t^{-1}))=t.
\]
Its coroot is realized by
$H_a(t)=\operatorname{diag}(t^2,1,t^{-2})$.  Thus $d_a=2$ and
\[
 (e_a,b_a,s_a)=(1,2,4)\quad\text{for }\chi_+,
 \qquad (e_a,b_a,s_a)=(-1,1,2)\quad\text{for }\chi_-.
\]
Thus $s_a=d_ab_a$ in both cases.  The reducibility arrangement has
two chambers, $C^+$ and $C^-$, and $a$ is positive for the chosen
Borel.  The constituent indexed by $C^+$ is therefore the socle
and has nonzero monodromy; the constituent indexed by $C^-$ has
zero monodromy.

\subsection{The centralizers and their root spaces}

Take $G^\vee=\mathrm{GL}_3(\C)$, with diagonal torus $T^\vee$,
upper triangular $B^\vee$, and lower triangular $B^{\vee,-}$.
Let geometric Frobenius act through the pinned involution
\[
 \vartheta(g)=J\,{}^tg^{-1}J^{-1},\qquad
 J=\begin{pmatrix}0&0&1\\0&-1&0\\1&0&0\end{pmatrix}.
\]
The connected invariant torus is
\[
 A=\{\operatorname{diag}(t,1,t^{-1}):t\in\C^\times\},
 \qquad \varepsilon(\operatorname{diag}(t,1,t^{-1}))=t.
\]
The projection of $A$ to the Frobenius-coinvariant dual torus is the
isogeny $t\mapsto t^2$.  Indeed, a Frobenius representative
$\operatorname{diag}(t,1,t^{-1})\rtimes\vartheta$ squares to
$\operatorname{diag}(t^2,1,t^{-2})$, and reciprocity over $E$ sets
$t^2=\xi(\varpi)$.  The coordinate $z$ is therefore $t^{-2}$.
In particular its logarithmic differential is
\begin{equation}\label{eq:u3-coordinate-differential}
 \ell=-2\varepsilon.
\end{equation}
This is the isogeny used to define the logarithmic coordinates in
\cite[equation~(4-3)]{Sol23}.

Put $L=\log q$ and $D=\operatorname{diag}(i,1,-i)$.  The bounded parts
and the invariant logarithms may be taken as
\begin{align*}
 u_+(\mathrm{Fr}_F)&=1\rtimes\vartheta,
 &\sigma_+&=\operatorname{diag}(-L,0,L),\\*
 u_-(\mathrm{Fr}_F)&=D\rtimes\vartheta,
 &\sigma_-&=\operatorname{diag}(-L/2,0,L/2).
\end{align*}
Both bounded parameters are unramified.  The infinitesimal parameters are
\begin{equation}\label{eq:u3-infinitesimal}
 \widehat\chi_\pm(\mathrm{Fr}_F)
       =\exp(\sigma_\pm)u_\pm(\mathrm{Fr}_F).
\end{equation}
Squaring gives, respectively,
$\operatorname{diag}(q^{-2},1,q^2)$ and
$\operatorname{diag}(-q^{-1},1,-q)$, as required by
reciprocity over $E$.  The logarithms $\sigma_\pm$ lie in $\Lie(A)$;
they differ from the entrywise logarithms of representatives
$\operatorname{diag}(\xi(\varpi),1,1)\rtimes\vartheta$.

The centralizers $H_\pm=Z_{\mathrm{GL}_3}(u_\pm(W_F))$ of the
bounded parameters are
\begin{equation}\label{eq:u3-centralizers}
 H_+=\mathrm O(J),\quad H_+^\circ=\mathrm{SO}(J),
 \qquad
 H_-\cong\mathrm{SL}_2(\C)\times\{\pm1\}.
\end{equation}
For the last identification the $\mathrm{SL}_2$ factor acts on
$\langle e_1,e_3\rangle$ and $\{\pm1\}$ acts on $\langle e_2\rangle$.
Indeed, an element of $H_-$ commutes with
$(D\rtimes\vartheta)^2=\operatorname{diag}(-1,1,-1)$ and hence
preserves these two subspaces.  The form $DJ$ is alternating on the
first and symmetric on the second, giving precisely the indicated
symplectic and orthogonal factors.

The relevant positive root spaces, relative to the upper triangular
$B^\vee$, are consequently
\[
 \Lie(H_+^\circ)_{\varepsilon}
          =\C(E_{12}+E_{23}),\qquad
 \Lie(H_-^\circ)_{2\varepsilon}=\C E_{13}.
\]
Thus the positive roots of $H_+^\circ$ and $H_-^\circ$ are
$\varepsilon$ and $2\varepsilon$, respectively.  With the transported
lower triangular convention defined by $B^{\vee,-}$,
$\iota(a)=-\varepsilon$ for $\chi_+$ and
$\iota(a)=-2\varepsilon$ for $\chi_-$.  Thus
\eqref{eq:u3-coordinate-differential} gives exactly
$\ell=b_a\iota(a)$, with $b_a=2$ and $b_a=1$, respectively.
The full monodromy eigenspaces are
\begin{equation}\label{eq:u3-monodromy-spaces}
 \begin{aligned}
 \{X\in\Lie(H_+^\circ):[\sigma_+,X]=-LX\}
       &=\C(E_{12}+E_{23}),\\
 \{X\in\Lie(H_-^\circ):[\sigma_-,X]=-LX\}
       &=\C E_{13}.
 \end{aligned}
\end{equation}
The vector $E_{12}+E_{23}$ spans a single root space of
$H_+^\circ$ but involves two absolute root spaces of $\mathrm{GL}_3$.
Neither $E_{12}$ nor $E_{23}$ belongs to $\Lie(H_-)$.

\subsection{The parameters of the constituents}

Theorem~\ref{thm:main} gives the following representatives, with
infinitesimal parameters fixed by \eqref{eq:u3-infinitesimal}:
\[
\begin{array}{c|c|c|c}
 \text{inducing character}&\text{chamber}&N&\rho\\ \hline
 \chi_+&C^+\ (\text{socle})&E_{12}+E_{23}&\triv\\
 \chi_+&C^-&0&\triv\\
 \chi_-&C^+\ (\text{socle})&E_{13}&\triv\\
 \chi_-&C^-&0&\triv
\end{array}
\]
The commutators in \eqref{eq:u3-monodromy-spaces} give
$\Ad(\widehat\chi_\pm(\mathrm{Fr}_F))N=q^{-1}N$.
For the parameters with nonzero monodromy, let
$r_+:\mathrm{SL}_2\to\mathrm{SO}(J)$ be a principal homomorphism
with raising operator $E_{12}+E_{23}$ and diagonal element
$\operatorname{diag}(2,0,-2)$; the lowering operator is
$2(E_{21}+E_{32})$.  These three matrices form an $\mathfrak{sl}_2$
triple in $\Lie(\mathrm{SO}(J))$.  Let
$r_-:\mathrm{SL}_2(\C)\to H_-^\circ$ act in the usual way on
$\langle e_1,e_3\rangle$ and trivially on $e_2$, with raising operator
$E_{13}$ and diagonal element $\operatorname{diag}(1,0,-1)$.
The corresponding $L$-parameters are
\[
 \phi_\pm(w,x)=u_\pm(w)r_\pm(x).
\]
The images commute, and their infinitesimal parameters are exactly
\eqref{eq:u3-infinitesimal}.  Both $u_\pm$ are bounded and the
centralizers of the full parameters are finite.  Hence these socles
are discrete series by the temperedness and discreteness properties
of \cite[Lemmas~7.7--7.8]{Sol23}; the centre of $G(F)$ is compact.
The socle of $I(\chi_+)$ is the Steinberg representation, since
$\chi_+=\delta_B^{1/2}$, where $\delta_B$ is the modulus character
of $B(F)$:
\[
 \delta_B(t)=\left|\det\bigl(\Ad(t)|_{\Lie(U)}\bigr)\right|_F
 \qquad(t\in T(F)).
\]
The zero-monodromy constituents have the unbounded Weil parameters
$\widehat\chi_\pm$ and are not tempered.

The two parameters with nonzero monodromy have different component
groups.
For $N=E_{12}+E_{23}$, the full parameter centralizer is
$\{\pm I_3\}=Z(G^\vee)^{W_F}$, so $\mathcal R_{\phi_+}=1$.
For $N=E_{13}$ it is
\[
 Z_{G^\vee}(\phi_-(W_F\times\mathrm{SL}_2))
   =\{\operatorname{diag}(a,b,a):a,b\in\{\pm1\}\}.
\]
Dividing by $Z(G^\vee)^{W_F}=\{\pm I_3\}$ gives
\begin{equation}\label{eq:u3-nontrivial-component}
 \mathcal R_{\phi_-}\cong\Z/2\Z,\qquad \rho_{\phi_-}=\triv.
\end{equation}
For either zero-monodromy parameter, the centralizer is
$\{\operatorname{diag}(t,b,t^{-1}):t\in\C^\times,b=\pm1\}$;
its quotient by $\{\pm I_3\}$ is connected, so the component group
is trivial.  Thus triviality of the enhancement in the chamber formula
does not assert triviality of the Langlands component group.

\begin{remark}[The centralizer of the infinitesimal parameter]
The centralizer of the zero-monodromy parameter computed above is
also $Z_{H_\pm}(\sigma_\pm)$.
This group has identity component $A$, and each of its components
has a representative in $Z(G^\vee)^{W_F}$.  It fixes every
$\sigma_\pm$-fixed Borel of $H_\pm^\circ$, and its quotient by
$Z(G^\vee)^{W_F}$ is a connected torus.  Thus regularity of the
inducing character does not imply that the full centralizer is
connected.
\end{remark}

\subsection*{Acknowledgements}
This problem was suggested by Prof. Dongwen Liu during the summer school ``Algebra and Number Theory 2026'' held by Peking University and the Academy of Mathematics and Systems Science of the Chinese Academy of Sciences. We would like to express our sincere gratitude to Prof. Shou-Wu Zhang and Prof. Liang Xiao,
and we thank the lecturers of the summer school for their support. Without their efforts, the summer school could not have been such a success.

\end{document}